\newtheorem{example}{Example}[section]
\newtheorem{remark}{Remark}[section]
\numberwithin{equation}{section}
\DeclareSymbolFont{fouriersymbols}{FMS}{futm}{m}{n}
\DeclareSymbolFont{fourierlargesymbols}{FMX}{futm}{m}{n}
\DeclareMathDelimiter{\tbar}{\mathord}{fouriersymbols}{152}{fourierlargesymbols}{147}
\newcommand{\jump}[1]{\left[\!\left[#1\right]\!\right]}
\newcommand{\bigjump}[1]{\left[\!\!\left[#1\right]\!\!\right]}
\def\NN{\hbox{\rlap{I}\kern.16em N}}
\def\NC{\hbox{\rlap{\kern.24em\raise.1ex\hbox
                  {\vrule height1.3ex width.9pt}}C}}
\def \bZ {\mathbb Z}
\title{Superconvergence of Immersed Finite Volume Methods for one-dimensional Interface Problems}
\author{
Waixiang Cao\thanks{School of Data and Computer Science, Sun Yat-Sen University, Guangzhou, Guangdong 51006, China (caowx5@mail.sysu.edu.cn).}
\and
Xu Zhang\thanks{Department of Mathematics and Statistics, Mississippi State University, Mississippi State, MS 39762, USA (xuzhang@math.msstate.edu).}
\and
Zhimin Zhang\thanks{Beijing Computational Science Research Center, Beijing 100193, China (zmzhang@csrc.ac.cn); and Department of Mathematics, Wayne State University, Detroit, MI 48202, USA (zzhang@math.wayne.edu).}
\and
Qingsong Zou\thanks{School of Data and Computer Science, Sun Yat-Sen University, Guangzhou, Guangdong 51006, China
(mcszqs@mail.sysu.edu.cn).}
}
\begin{document}
\thanks{The work of W. Cao was supported in part by
the NSFC grant 11501026, and the China Postdoctoral Science Foundation grant 2016T90027, 2015M570026.
The work of Z. Zhang was supported in part by
the NSFC grants 11471031, 91430216, and U1530401; and NSF grant DMS-1419040.
The work of Q. Zou was supported in part by
the NSFC grants  11571384 and 11428103,  by Guangdong NSF grant 2014A030313179 and by Fundamental Research Funds for the Central Universities  grant
16lgjc80.}
\maketitle

\begin{abstract}
In this paper, we introduce a class of high order immersed finite volume methods (IFVM) for one-dimensional interface problems. 
We show the optimal convergence of IFVM in $H^1$- and $L^2$- norms. We also prove some superconvergence results of IFVM. 
To be more precise, the IFVM solution is superconvergent of order $p+2$ at the roots of generalized Lobatto polynomials, and the flux is superconvergent of order $p+1$ at generalized Gauss points on each element including the interface element. Furthermore, for diffusion interface problems, the convergence rates for IFVM solution at the mesh points and the flux at generalized Gaussian points can both be raised to $2p$. These superconvergence results are consistent with those for the standard finite volume methods. Numerical examples are provided to confirm our theoretical analysis. 
\end{abstract}

\begin{keywords}
superconvergence,  immersed finite volume method, interface problems, generalized orthogonal polynomials
\end{keywords}


\pagestyle{myheadings}
\thispagestyle{plain}

\section{Introduction}
Interface problems arise in many simulations in science and engineering that involve multi-physics and multi-materials. Classical numerical methods, such as finite element methods (FEM) \cite{1994BrennerScott,1998ChenZou,1982Xu}, and finite volume methods (FVM) \cite{Bank.R;Rose.D1987,Barth.T;Ohlberger2004,
Cai.Z1991, Cai.Z_Park.M2003, ChenWuXu2011, Ewing.R;Lin.T;Lin.Y2002,
EymardGallouetHerbin2000, Li.R2000,
Ollivier-Gooch;M.Altena2002,Plexousakis_2004,Suli1991,
Xu.J.Zou.Q2009,ZhangZou_NM_2015} usually require solution meshes to fit the interface; otherwise, the convergence may be impaired. The immersed finite element methods (IFEM) \cite{2007AdjeridLin, 2009AdjeridLin, 1998Li} are a class of FEM that relax the body-fitting requirement, hence Cartesian meshes can be used for solving interface problems with arbitrary interface geometry. The key ingredient of IFEM is to design some special basis functions on interface elements that can capture the non-smoothness of the exact solution. Recently, this \textit{immersed} idea has also been used in a variety of numerical schemes such as conforming FEM \cite{2008HeLinLin,2003LiLinWu,2015LinLinZhang}, nonconforming FEM \cite{2010KwakWeeChang,2013LinSheenZhang,2015LinSheenZhang}, discontinuous Galerkin methods \cite{2010HeLinLin,2015LinYangZhang1,2016YangZhang}, and FVM \cite{1999EwingLiLinLin,2009HeLinLin}. 

The use of structured mesh, especially Cartesian meshes, often leads to some superconvergence phenomenon. The superconvergence is a phenomenon that the order of convergence at certain points surpass the maximum order of convergence of the numerical schemes. There has been a growing interest in the study of superconvergence, for example, finite element methods \cite{Babuska1996,Bramble.Schatz.math.com,Chen.C.M2012,Neittaanmaki1987,V.Thomee.math.comp,1995Wahlbin},
 finite volume methods \cite{Cai.Z1991, Cao;Zhang;Zou2012,Cao;zhang;zou:2kFVM, Chou_Ye2007,Xu.J.Zou.Q2009}, discontinuous Galerkin and local discontinuous Galerkin methods \cite{Adjerid;Massey2006,Cao-Shu-zhang-Yang2D,
Cao;zhang:supLDG2k+1,Cao;zhang;zou:2k+1,Guo_zhong_Qiu2013,Xie;Zhang2012,Yang;Shu:SIAM2012}.

In this article, we first introduce a class of high order IFVM for one dimensional interface problems. Thanks to the unified construction of FVM schemes in \cite{Cao;Zhang;Zou2012, ZhangZou_NM_2015} and the generalized orthogonal polynomials developed in \cite{2017CaoZhangZhang}, we can develop the high order IFVM in a systematical approach. To be more specific, we adopt the standard $p$-th degree IFE spaces \cite{2007AdjeridLin, 2009AdjeridLin, 2017CaoZhangZhang} as our trial function space. Using the roots of generalized Legendre polynomials, known as generalized Gauss points, as the control volume, we construct the test function space as the piecewise constant corresponding to the dual meshes. The advantage of our IFVM is that it does not require the mesh to be aligned with the interface, and it inherits all the desired properties of the classical FVM such as local conservation of flux.

The main focus of this article is the error analysis of IFVM, especially the superconvergence analysis. By establishing the inf-sup condition and continuity of the bilinear form, we prove that our IFVM converge optimally in $H^1$- norm. As for the superconvergence,
we prove that the immersed finite volume (IFV) solution is superconvergent of the order $O(h^{p+2})$ at the generalized Lobatto points on both non-interface and interface elements, and the flux error is superconvergent at the generalized Gauss points of the order $O(h^{p+1})$. The error of IFV solution and the Gauss-Lobatto projection is superclose. In particular, for the diffusion interface problem, we show that the convergence rate of both the solution error at nodes and the flux error at Gauss points can be enhanced to $O(h^{2p})$. All these results are consistent with the superconvergence analysis of the standard FVM in \cite{Cao;Zhang;Zou2012}.

However, there is a significant difference in the superconvergence analysis of IFVM compared with the analysis of standard FVM \cite{Cao;Zhang;Zou2012}. Due to the low global regularity of the exact solution, the standard approach using the Green function cannot be directly applied to the IFVM for interface problems. The key ingredient in the analysis is the construction of generalized Lobatto points and a specially designed interpolation function. That is, we first choose a class of generalized Lobatto polynomials as our basis functions  that satisfy both orthogonality and interface jump conditions, then we use these orthogonal basis function to design a special interpolant of the exact solution which is superclose to the IFV solution. The supercloseness of the interpolation and the IFV solution yields the desired superconvergence results for the IFV solution.

The rest of the paper is organized as follows.
In Section 2 we recall the generalized orthogonal polynomials and present the high order IFVM for interface problems in one-dimensional setting. 
In Section 3 we provide a unified analysis for the inf-sup condition and establish the optimal convergence in $H^1$ norm.
In Section 4, we study the superconvergence property of IFVM. We identify and analyze superconvergence points for the IFV solution at both interface and non-interface elements. Numerical examples are presented in Section 5. Finally, some concluding remarks are summarized in Section 6. 

In the rest of this paper, we use the notation``$A\lesssim B$" to denote $A$ can be bounded by $B$ multiplied by a constant independent of the mesh size. Moreover, ``$A\sim B$" means $``A\lesssim B"$ and $``B\lesssim A"$.

\section{Interface Problems and Immersed Finite Volume Methods}
Assume that $\Omega = (a,b)$ is an open interval in $\mathbb{R}$. Let $\alpha\in \Omega$ be an interface point such that $\Omega^- = (a,\alpha)$ and $\Omega^+ = (\alpha,b)$. Consider the following one-dimensional elliptic interface problem
\begin{equation}\label{eq: DE}
  -(\beta u')' + \gamma u' + cu = f, ~~~x\in \Omega^-\cup\Omega^+,
\end{equation}
\begin{equation}\label{eq: BC}
  u(a) = u(b) = 0.
\end{equation}
Here, the coefficients $\gamma$ and $c$ are assumed to be constants. The diffusion coefficient $\beta$ has a finite jump across the interface. Without loss of generality, we assume it is a piecewise constant function
\begin{equation}\label{eq: jump}
  \beta(x) =
  \left\{
    \begin{array}{ll}
      \beta^-, & \text{if}~ x\in\Omega^-, \\
      \beta^+, & \text{if}~ x\in\Omega^+,
    \end{array}
  \right.
\end{equation}
where $\beta_0=\min\{\beta^+,\beta^-\}>0$. At the interface $\alpha$, the solution is assumed to satisfy the interface jump conditions
\begin{equation}\label{eq: jump condition}
  \jump{u(\alpha)} = 0,~~~
  \bigjump{\beta u'(\alpha)} = 0,
\end{equation}
where $\jump{v(\alpha)} = \lim\limits_{x\to\alpha^+}v(x) - \lim\limits_{x\to\alpha^-}v(x)$.

\subsection{Generalized orthogonal polynomials}
First, we briefly review the generalized Legendre and Lobatto polynomials developed in \cite{2017CaoZhangZhang}. These generalized orthogonal polynomials will be used to form the trial function space in the IFVM.

Let $\tau=[-1,1]$ be the reference interval, and $P_n(\xi)$ be the standard Legendre polynomial of degree $n$ on $\tau$ satisfying the following orthogonality condition
 \begin{equation}\label{eq: Legendre orthogonality}
  \int_{-1}^1 P_m(\xi)P_n(\xi) d\xi = \frac{2}{2n+1}\delta_{mn}.
\end{equation}
Define a family of Lobatto polynomials $\{\psi_n\}$ on $\tau$ as follows
\begin{equation}\label{eq: Lobatto Poly}
  \psi_0(\xi) = \frac{1-\xi}{2},~~~
  \psi_1(\xi) = \frac{1+\xi}{2},~~~
  \psi_n(\xi) = \int_{-1}^\xi P_{n-1}(t)dt, ~~~ n\geq 2.
\end{equation}
The generalized Legendre polynomials $\{L_n\}$ on $\tau$ with a discontinuous weight is defined as
\begin{equation}\label{eq: general Legendre orthogonality}
  (L_n,L_m)_w :=\int_{-1}^1 w(\xi) L_n(\xi) L_m(\xi) d\xi = c_n\delta_{mn},
\end{equation}
where $w(\xi) =  \frac{1}{\hat\beta(\xi)}$ and
\begin{equation}\label{eq: discontinuous weight}
  \hat\beta(\xi) =
  \left\{
    \begin{array}{ll}
      \beta^-, & \text{if}~ \xi\in\tau^-=(-1,\hat\alpha), \\
      \beta^+, & \text{if}~ \xi\in\tau^+=(\hat\alpha,1).
    \end{array}
  \right.
\end{equation}
The generalized Lobatto polynomials $\{\phi_n\}$ can be constructed in a similar manner as \eqref{eq: Lobatto Poly} as follows:
\begin{eqnarray}
  \phi_0(\xi) &=& \left\{\begin{array}{ll}
      \frac{(1-\hat\alpha)\beta^-+(\hat\alpha-\xi)\beta^+}{(1-\hat\alpha)\beta^-+(1+\hat\alpha)\beta^+}, & \text{in}~\tau^-, \vspace{1mm}\\
      \frac{(1-\xi)\beta^-}{(1-\hat\alpha)\beta^-+(1+\hat\alpha)\beta^+}, & \text{in}~ \tau^+.
    \end{array}\right. \label{eq: lobatto 0}\\
  \phi_1(\xi) &=& \left\{
  \begin{array}{ll}
      \frac{(1+\xi)\beta^+}{(1-\hat\alpha)\beta^-+(1+\hat\alpha)\beta^+}, & \text{in}~\tau^-, \vspace{1mm}\\
      \frac{(\xi-\hat\alpha)\beta^-+(1+\hat\alpha)\beta^+}{(1-\hat\alpha)\beta^-+(1+\hat\alpha)\beta^+}, & \text{in}~\tau^+.
    \end{array}\right. \label{eq: lobatto 1}\\
  \phi_{n}(\xi) &=&  \int_{-1}^\xi w(t) L_{n-1}(t)dt,~~~n \geq 2.\label{eq: lobatto n}
\end{eqnarray}

These generalized orthogonal polynomials can be used as local basis functions on interface element, as they satisfy both the orthogonality and interface jump conditions:
\[
    \bigjump{ \phi_n(\hat\alpha)} = 0,\ \ \ \bigjump{\hat \beta \phi_n^{(j)}(\hat\alpha)} = 0,~~~~\forall~ j = 1,2, \cdots, n.
\]
Note that the generalized Legendre polynomials are polynomials, but the generalized Lobatto polynomials are piecewise polynomials. As pointed out in \cite{2017CaoZhangZhang}, the generalized orthogonal polynomials can be explicitly constructed. In Figure \ref{fig: lobatto IFE basis}, we plot the first few generalized orthogonal polynomials for $\hat\beta = [1,5]$, and the reference interface point $\hat\alpha = 0.15$. For comparison, we also plot the standard Legendre and Lobatto polynomials in Figure \ref{fig: lobatto FE basis}. We note that these functions are consistent with the generalized orthogonal polynomials when $\beta^+ = \beta^-$, as stated in Lemma 3.2 in \cite{2017CaoZhangZhang}.
\begin{figure}[thb]
  \centering
  \includegraphics[width=.4\textwidth]{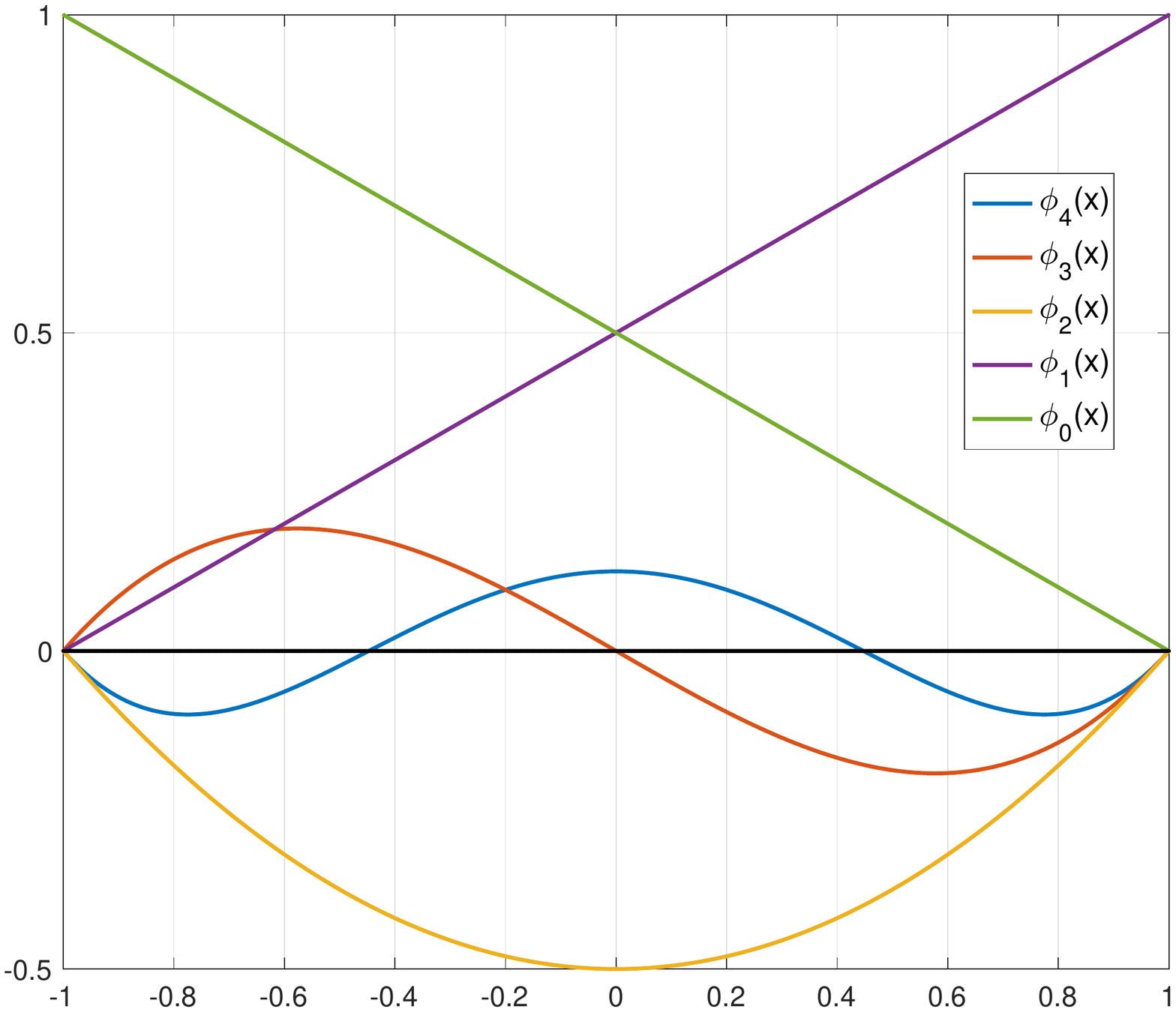}
  \includegraphics[width=.4\textwidth]{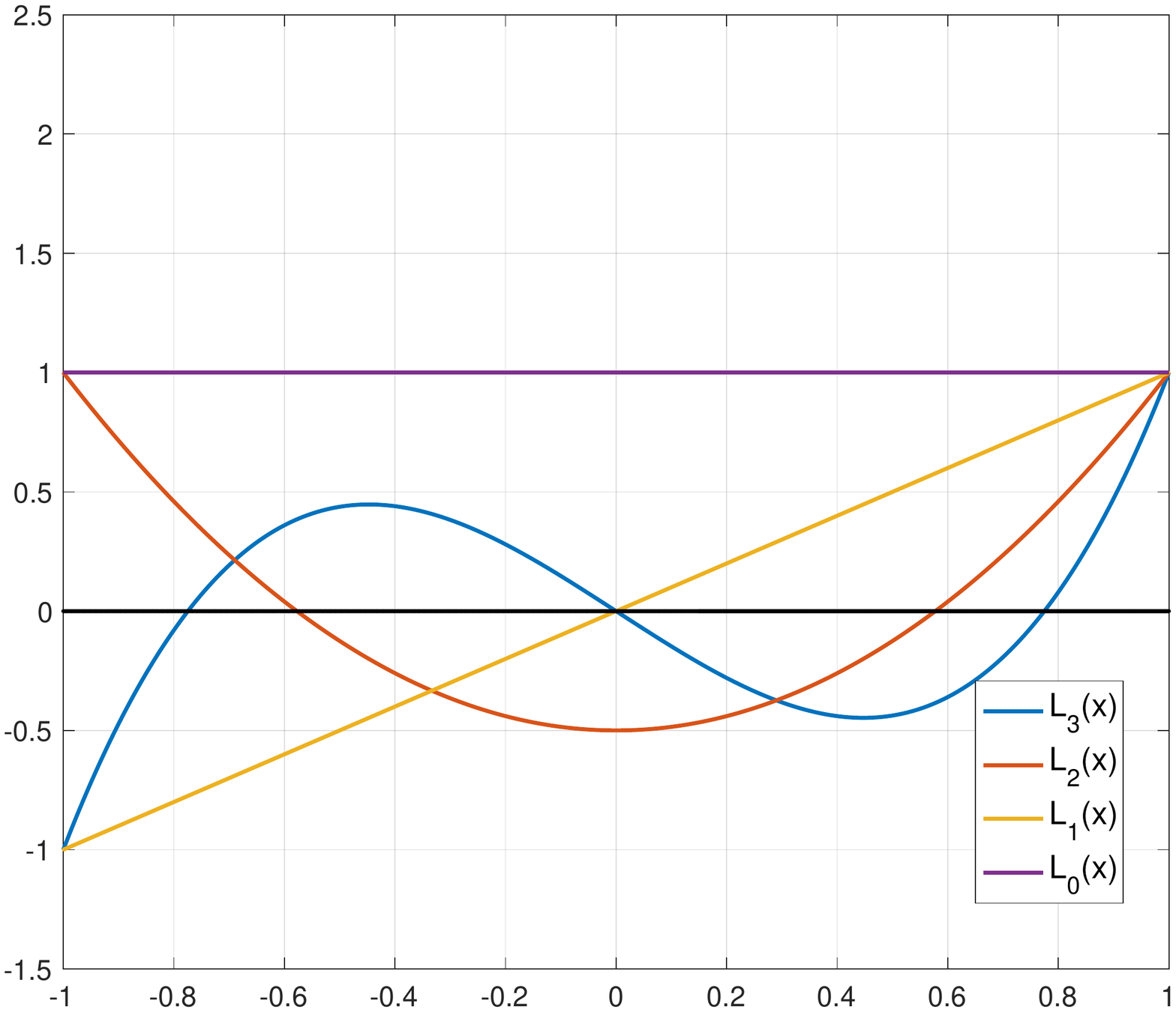}\\
  \caption{Standard Lobatto (left) and Legendre (right) polynomials}
  \label{fig: lobatto FE basis}
\end{figure}

\begin{figure}[thb]
  \centering
  \includegraphics[width=.4\textwidth]{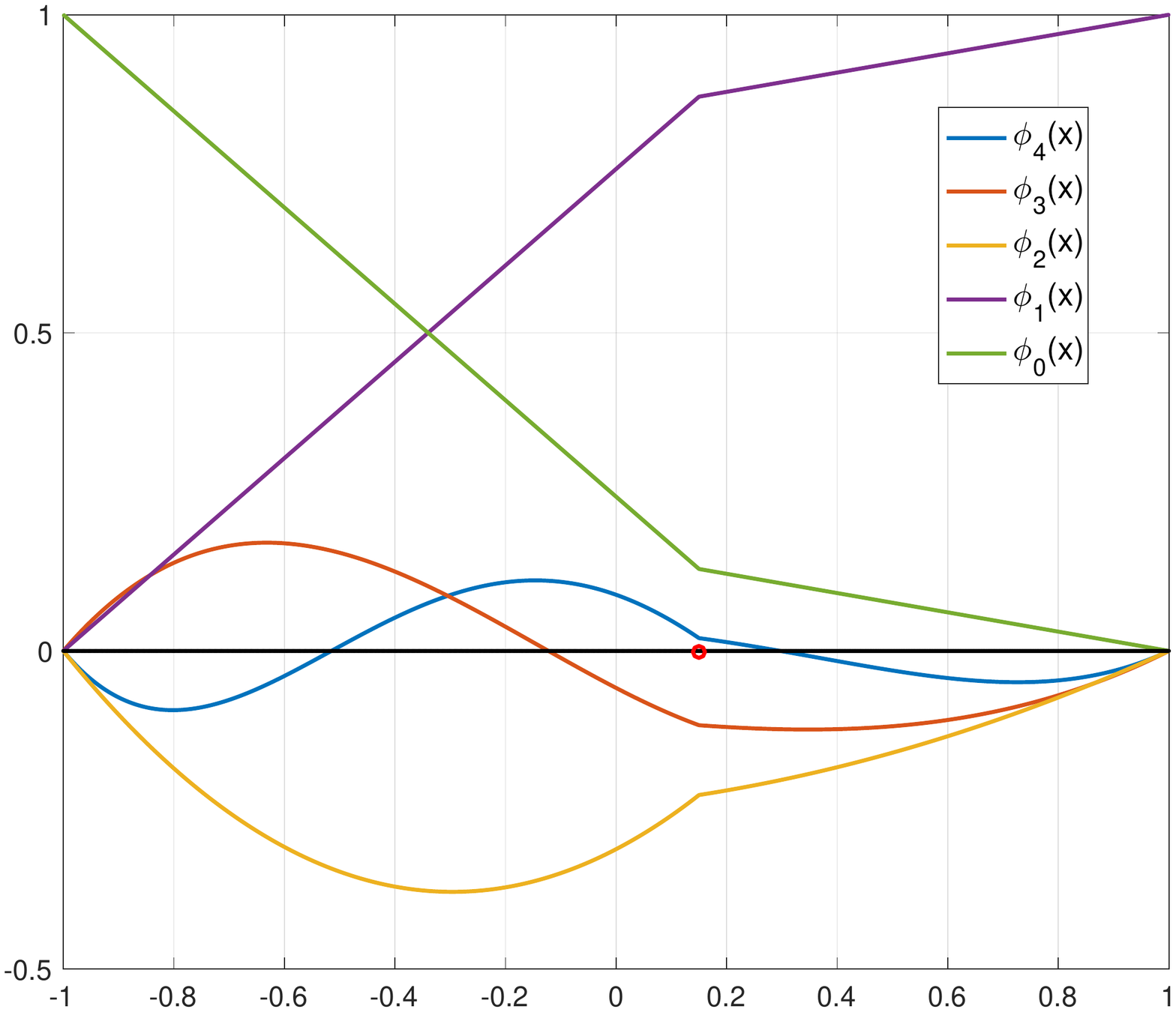}
  \includegraphics[width=.4\textwidth]{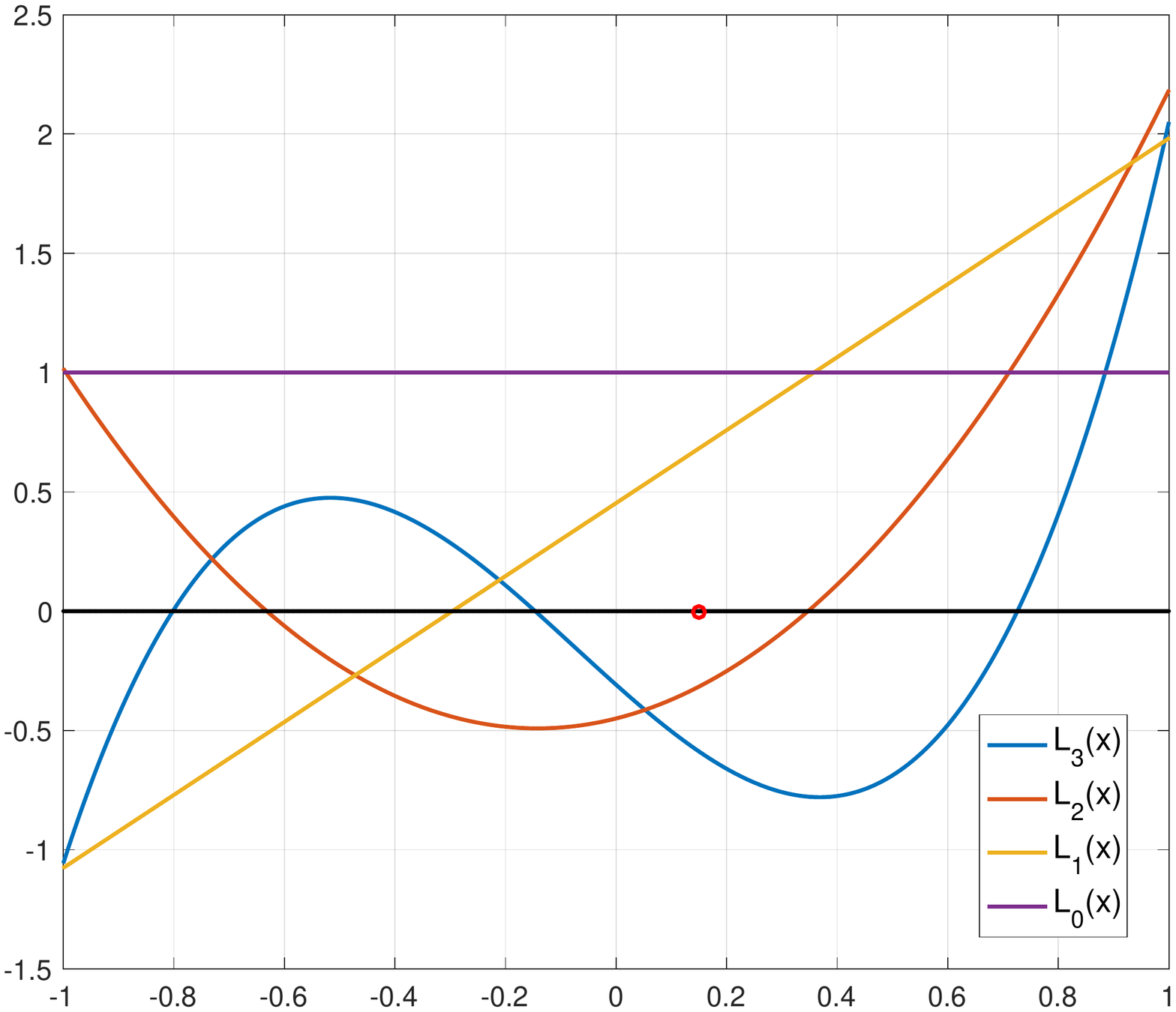}\\
  \caption{Generalized Lobatto (left) and Legendre (right) polynomials with interface $\hat\alpha = 0.15$}
  \label{fig: lobatto IFE basis}
\end{figure}

\subsection{Immersed finite volume methods}
In the subsection, we introduce the immersed finite volume methods
for solving the interface problem \eqref{eq: DE} - \eqref{eq: jump condition}.
Consider the following partition of $\Omega$, independent of interface
\begin{equation}\label{eq: partition}
  a = x_0 < x_1< \cdots < x_{k-1} <\alpha< x_{k}<\cdots<x_N = b.
\end{equation}
For a positive integer N, let $\bZ_N:=\{1,\cdots,N\}$ and for all $i \in \bZ_N$, we denote $\tau_i=[x_{i-1},x_i]$ and  $h_i=x_i-x_{i-1}$, $h=\max\limits_{i\in \bZ_N}h_i$.  Let
$\mathcal{T}= \{\tau_i\}_{i = 1}^N$ be a partition of $\Omega$, and we suppose the partition is shape regular, i.e., the ratio between the maximum 
and minimum mesh sizes shall stay bounded during mesh refinements.
   We call the element $\tau_k$ the interface element since it
   contains the interface point $\alpha$,  and the rest of elements $\tau_i$, $i\ne k$ noninterface elements.

The basis functions of the trial function space is constructed using the (generalized) Lobatto polynomials. In fact, we define the basis functions in each element $\tau_i, i\in\bZ_N$ as
\begin{eqnarray}
 \phi_{i,n}(x) &=& \left\{
  \begin{array}{ll}
      \psi_n(\xi) = \psi_n\left(\frac{2x-x_{i-1}-x_i}{h_{i}}\right), & i\neq k, \vspace{1mm}\\
      \phi_n(\xi) = \phi_n\left(\frac{2x-x_{k-1}-x_k}{h_{k}}\right), & i=k.
    \end{array}\right.
 \end{eqnarray}
Then  corresponding trial function space is defined by
\begin{equation}\label{eq: IFE space}
 U_{\cal T}:= \{v\in C(\Omega): v|_{\tau_i} \in \text{span}\{\phi_{i,n}: n = 0,1,\cdots, p\}, v(a)=v(b)=0\}.
\end{equation}
Obviously, $\dim U_{\cal T}=Np-1$.

Next we present the dual partition and its corresponding test function space. It has been shown in \cite{2017CaoZhangZhang} that the generalized Legendre polynomials $\{L_n\}$ and generalized Lobatto polynomials $\{\phi_n\}$ have same numbers of roots as the standard Legendre polynomials $\{P_n\}$ and Lobatto polynomials $\{\psi_n\}$. Let
  \begin{eqnarray}
 P_{i,n}(x) &=& \left\{
  \begin{array}{ll}
      P_n(\xi) = P_n\left(\frac{2x-x_{i-1}-x_i}{h_{i}}\right), & i\neq k, \vspace{1mm}\\
      L_n(\xi) = L_n\left(\frac{2x-x_{k-1}-x_k}{h_{k}}\right), & i=k.
    \end{array}\right.
 \end{eqnarray}
We denote by $g_{i,j}, j\in\bZ_n$ the (generalized) Gauss points of degree $n$ in $\tau_i$. That is, the $n$ roots of $P_{i,n}$. With these Gauss points,  we construct a dual partition
\[
  {\cal T}'=\{\tau'_{1,0}, \tau'_{N,p}\}\cup \{\tau'_{i,j}: (i,j)\in \bZ_N \times \bZ_{p_i}\},
\]
  where
\[
  \tau'_{1,0}=[a,g_{1,1}], \tau'_{N,p}=[g_{N,p},b],\tau'_{i,j}=[g_{i,j}, g_{i,j+1}],
\]
here
\[
  p_i=\left\{\begin{array}{lll}
 p& \text{if} &i \in \bZ_{N-1}\\
p-1 & \text{if} &i=N
\end{array}
\right.
\text{and} \quad g_{i,p+1}=g_{i+1,1},\forall i \in\bZ_{N-1}.
\]
The test function space $V_{\cal T'}$ consists of the piecewise constant functions with respect
to the partition $\cal T'$, which vanish on the intervals $\tau'_{1,0}\cup \tau'_{N,p}$.
In other words,
\[
  V_{\cal T'}=\text{Span}\left\{\varphi_{i,j}: (i,j)\in \bZ_N \times \bZ_{p_i}\right\},
\]
where $\varphi_{i,j}=\chi_{[g_{i,j}, g_{i,j+1}]} $ is the characteristic function on the interval $\tau'_{i,j}$. We find that $\dim V_{\cal T'}=Np-1=\dim U_{\cal T}$.
The IFVM for solving \eqref{eq: DE} - \eqref{eq: jump condition} is: find $u_{\cal T}\in U_{\cal T}$ such that
\begin{eqnarray}
\beta(g_{i,j})u_{\cal T}'(g_{i,j})-\beta(g_{i,j+1})u_{\cal T}'(g_{i,j+1}) &+&\int_{g_{i,j}}^{g_{i,j+1}}\big( \gamma u_{\cal T}'(x)+cu_{\cal T}(x)\big)dx\nonumber \\
 &=& \int_{g_{i,j}}^{g_{i,j+1}}f(x){dx},~~~\forall (i,j)\in \bZ_N\times \bZ_{p_i}.\label{conserve}
\end{eqnarray}

   Given a function $v_{\cal T'}\in V_{\cal T'}$,  $v_{\cal T'}$ can be represented as
\[
   v_{\cal T'}=\sum_{i=1}^{N}\sum_{j=1}^{p_i}v_{i,j}\varphi_{i,j},
\]
 where  $v_{i,j}, (i,j)\in\bZ_N\times\bZ_{p_i}$ are constants. Multiplying \eqref{conserve} with $v_{i,j}$ and then summing up all $i,j$, we obtain
\begin{equation*}
\begin{split}
 \sum_{i=1}^{N}\sum_{j=1}^{p_i} v_{i,j}\left((\beta u'_{\cal T})(g_{i,j})-(\beta u_{\cal T}')(g_{i,j+1})+\int_{g_{i,j}}^{g_{i,j+1}}\big(\gamma u_{\cal T}'(x)+cu_{\cal T}(x)\big)dx \right)
 =\int_a^b f(x)v_{\cal T'}(x) dx,
 \end{split}
\end{equation*}
  or equivalently,
\begin{equation*}
   \sum_{i=1}^{N}\sum_{j=1}^{p} [v_{i,j}](\beta u_{\cal T}')(g_{i,j})+\sum_{i=1}^{N}\sum_{j=1}^{p_i} v_{i,j}\left(\int_{g_{i,j}}^{g_{i,j+1}}\big(\gamma u_{\cal T}'(x)+cu_{\cal T}(x)\big)dx \right)
   =\int_a^b f(x)v_{\cal T'}(x) dx,
\end{equation*}
   where $[v_{i,j}]=v_{i,j}-v_{i,j-1}$ is the jump of $v$ at the point $g_{i,j}, (i,j)\in \bZ_N \times \bZ_p$ with $v_{1,0}=0, v_{N,p}=0$ and $v_{i,0}=v_{i-1,p},2\leq i\leq N$.

The bilinear form of IFVM can be written as
\begin{equation}\label{bilinear1}
   a(u,v_{\cal T'})=\sum_{i=1}^{N}\sum_{j=1}^{p} [v_{i,j}]\beta(g_{i,j})u'(g_{i,j})
   +\sum_{i=1}^{N}\sum_{j=1}^{p_i} v_{i,j}\left(\int_{g_{i,j}}^{g_{i,j+1}}\big(\gamma u'(x)+cu(x)\big)dx \right),
\end{equation}
for all $u\in H_0^1(\Omega), v_{\cal T'}\in V_{\cal T'}$.
  Then our IFVM for the interface problem \eqref{eq: DE} - \eqref{eq: jump condition} can be rewritten as: Find $u_{\cal T}\in U_{\cal T}$ such that
\begin{equation}\label{eq: IFE method}
  a(u_{\cal T},v_{\cal T'})= (f,v_{\cal T'}),~~~~\forall v_{\cal T'}\in  V_{\cal T'}.
\end{equation}

\section{Convergence analysis}
    In this section, we derive the error estimation for IFVM. Following the same idea as in \cite{Cao;Zhang;Zou2012}, we first prove the inf-sup condition and continuity of the IFVM, and then use them to establish the optimal convergence rate of the IFV approximation.

\subsection{Inf-sup condition}

  We begin with some preliminaries. First,
   for any sub-domain $\Lambda\subset\Omega$, where $\Lambda^{\pm}=\Lambda\cap\Omega^{\pm}$,
   we define the following Sobolev spaces for $m\ge 1$ and $q\ge 1$ in $\Lambda$ as
\begin{eqnarray}
  \tilde W_\beta^{m,q}(\Lambda)= \Big\{v\in C(\Lambda)\ \!\!\!\!&:&\!\!\! \ \ \ v|_{\Lambda^\pm}\in W^{m,q}(\Lambda^\pm),~v|_{\partial\Omega\cap \Lambda}=0, \nonumber\\
   && \!\!\!\bigjump{\beta v^{(j)}(\alpha)} = 0,~ j = 1,2,\cdots, m\Big\} \label{eq: space2}
\end{eqnarray}
   equipped the norm and semi-norm
\[
    \|v\|^q_{m,q,\Lambda}=\|v\|^q_{m,q,\Lambda^-}+\|v\|^q_{m,q,\Lambda^+},\ \
    |v|^q_{m,q,\Lambda}=|v|^q_{m,q,\Lambda^-}+|v|^q_{m,q,\Lambda^+}.
\]
   If $\Lambda=\Omega$, we usually write $\|\cdot\|_{m,q}$ instead of  $\|\cdot\|_{m,q,\Omega}$, and
    $|\cdot|_{m}, \|\cdot\|_m$  instead of $|\cdot|_{m, 2}, \|\cdot\|_{m,2}$ when $q=2$ for simplicity.
   Second, we define a discrete energy norm for all $v\in H^1(\Omega)$ by
\[
    \|v\|^2_{G}=|v|_{G}^2+\|v\|_1^2,\ \ \
    |v|_{G}^2=\sum_{i=1}^N\sum_{j=1}^p A_{i,j}(\beta v'(g_{i,j}))^2.
\]
    Here $A_{i,j}, (i,j)\in\bZ_N\times\bZ_p$ are the weights of the Gauss quadrature
\[
Q_p(F)=\sum_{j=1}^p A_{i,j} F(g_{i,j})
\]
for computing the integral
\[
I(F)=\int_{\tau_i} w(x) F(x) dx=\int_{\tau_i} \frac{1}{\beta(x)} F(x) dx.
\]
For all $v_{\cal T'}\in V_{h},\ v_{\cal T'}=\sum\limits_{i=1}^N\sum\limits_{j=1}^{p_i} v_{i,j}\varphi_{i,j}$,
we let
\[
\big|v_{\cal T'}\big|^2_{1,\cal T'}=\sum_{i=1}^{N}\sum_{j=1}^p h_i^{-1}[v_{i,j}]^2,\quad
 \big\|v_{\cal T'}\big\|^2_{0,\cal T'}= \sum_{i=1}^{N}\sum_{j=1}^{p_i} h_i v_{i,j}^2,
\]
and
\[
     \big\|v_{\cal T'}\big\|_{\cal T'}^2 = \big|v_{h}\big|_{1,\cal T'}^2 + \big\|v_{\cal T'}\big\|_{0,\cal T'}^2.
\]
Also, we define a linear mapping $\Pi_{h}:U_{\cal T} \rightarrow V_{\cal T'}$ by
\[
    v_{\cal T'}=\Pi_{h}v_{\cal T}=\sum_{i=1}^N \sum_{j=1}^{p_i} v_{i,j}\varphi_{i,j},
\]
where the coefficients $v_{i,j}$ are determined by the constraints
\begin{equation}\label{F:1}
    [v_{i,j}]=A_{i,j}(\beta v_{\cal T}')(g_{i,j}),\ \ (i,j)\in \bZ_N\times \bZ_{p_i}.
\end{equation}

\begin{lemma}  For any $v_{\cal T}\in U_{\cal T}$, there holds
\begin{equation}\label{equi:norm}
   \|v_{\cal T}\|_{1}\thicksim\|v_{\cal T}\|_{G},\ \   \|\Pi_hv_{\cal T}\|_{\cal T'}\lesssim \|v_{\cal T}\|_{1}.
\end{equation}
\end{lemma}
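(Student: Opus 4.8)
The plan is to derive both statements from one structural fact about the rule $Q_p$ defining the data $(g_{i,j},A_{i,j})$: it is the $p$-point Gauss quadrature for the \emph{weighted} integral $\int_{\tau_i}\beta^{-1}F\,dx$, so its weights are positive with $\sum_{j=1}^p A_{i,j}=\int_{\tau_i}\beta^{-1}\,dx\sim h_i$ and $A_{i,j}\lesssim h_i$, and the rule is exact whenever $F$ is a \emph{genuine} polynomial of degree $\le 2p-1$ on $\tau_i$. I would record this first, noting in particular that on the interface element $\tau_k$ the function $\beta v_{\cal T}'$ is a genuine polynomial of degree $\le p-1$: for $n\ge2$ one has $\beta\phi_n'=L_{n-1}$ by \eqref{eq: lobatto n}, while $\beta\phi_0'$ and $\beta\phi_1'$ reduce to constants by \eqref{eq: lobatto 0}--\eqref{eq: lobatto 1}. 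This is exactly where the jump-condition-respecting construction of the generalized Lobatto basis enters the analysis.

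For the equivalence $\|v_{\cal T}\|_1\thicksim\|v_{\cal T}\|_G$, the bound $\|v_{\cal T}\|_1\le\|v_{\cal T}\|_G$ is immediate from $\|v_{\cal T}\|_G^2=|v_{\cal T}|_G^2+\|v_{\cal T}\|_1^2$, so only $|v_{\cal T}|_G\lesssim\|v_{\cal T}\|_1$ needs work. Here I would apply exactness to $F=(\beta v_{\cal T}')^2$, which by the observation above is a polynomial of degree $\le2p-2$ on every element including $\tau_k$, to obtain the exact identity $\sum_{j=1}^p A_{i,j}(\beta v_{\cal T}'(g_{i,j}))^2=\int_{\tau_i}\beta^{-1}(\beta v_{\cal T}')^2\,dx=\int_{\tau_i}\beta(v_{\cal T}')^2\,dx$. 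Summing over $i$ gives $|v_{\cal T}|_G^2=\int_\Omega\beta(v_{\cal T}')^2\,dx$, which satisfies $\beta_0|v_{\cal T}|_1^2\le|v_{\cal T}|_G^2\le\max\{\beta^+,\beta^-\}\,|v_{\cal T}|_1^2$, and the equivalence follows.

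For the second estimate I would split $\|\Pi_hv_{\cal T}\|_{\cal T'}^2=|\Pi_hv_{\cal T}|_{1,\cal T'}^2+\|\Pi_hv_{\cal T}\|_{0,\cal T'}^2$. The seminorm part is direct: using \eqref{F:1} and $h_i^{-1}A_{i,j}^2\lesssim A_{i,j}$ (since $A_{i,j}\lesssim h_i$) gives $|\Pi_hv_{\cal T}|_{1,\cal T'}^2=\sum_{i,j}h_i^{-1}A_{i,j}^2(\beta v_{\cal T}'(g_{i,j}))^2\lesssim|v_{\cal T}|_G^2\lesssim\|v_{\cal T}\|_1^2$ by the first part. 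For the $L^2$ part I would first telescope: since $\beta v_{\cal T}'$ has degree $\le p-1$ the rule is exact for it, so $\sum_{j=1}^p[v_{i,j}]=\sum_{j=1}^p A_{i,j}(\beta v_{\cal T}')(g_{i,j})=\int_{\tau_i}v_{\cal T}'\,dx=v_{\cal T}(x_i)-v_{\cal T}(x_{i-1})$; with $v_{1,0}=0$ this yields $v_{i,p}=v_{\cal T}(x_i)$ and, for $1\le j\le p_i$, $v_{i,j}=v_{\cal T}(x_{i-1})+\sum_{l=1}^j A_{i,l}(\beta v_{\cal T}')(g_{i,l})$. A Cauchy--Schwarz bound on the partial sum, $\big(\sum_{l\le j}A_{i,l}(\beta v_{\cal T}')(g_{i,l})\big)^2\le\big(\sum_l A_{i,l}\big)\big(\sum_l A_{i,l}(\beta v_{\cal T}')^2(g_{i,l})\big)\lesssim h_i\int_{\tau_i}\beta(v_{\cal T}')^2\,dx$, then gives $v_{i,j}^2\lesssim v_{\cal T}(x_{i-1})^2+h_i\int_{\tau_i}\beta(v_{\cal T}')^2\,dx$. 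Summing $h_iv_{i,j}^2$ reduces matters to $\sum_i h_i v_{\cal T}(x_{i-1})^2$ and $\sum_i h_i^2\int_{\tau_i}\beta(v_{\cal T}')^2\,dx$; the latter is $\lesssim h^2|v_{\cal T}|_1^2\lesssim|v_{\cal T}|_1^2$, while for the former I would bound each nodal value by an inverse inequality $v_{\cal T}(x_{i-1})^2\lesssim h_i^{-1}\int_{\tau_i}v_{\cal T}^2\,dx$ on an \emph{adjacent non-interface} element (using shape regularity $h_i\sim h_{i\pm1}$), yielding $\lesssim\|v_{\cal T}\|_0^2$. Hence $\|\Pi_hv_{\cal T}\|_{0,\cal T'}^2\lesssim\|v_{\cal T}\|_1^2$.

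I expect the main obstacle to be the $\|\cdot\|_{0,\cal T'}$ term rather than anything about the interface per se: the coefficients $v_{i,j}$ are cumulative sums of jumps rather than local quantities, so one must route the estimate through the exact telescoping identity, the Cauchy--Schwarz partial-sum bound, and an inverse inequality, taking care to estimate nodal values on neighbouring non-interface elements so as to avoid the possibly tiny subintervals of $\tau_k$. Verifying the quadrature exactness on $\tau_k$ — that $\beta v_{\cal T}'$ and hence $(\beta v_{\cal T}')^2$ are genuine polynomials of the expected degree — is the other delicate point, but it follows directly from \eqref{eq: lobatto 0}--\eqref{eq: lobatto n}.
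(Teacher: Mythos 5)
Your proposal is correct, and for the first equivalence it coincides with the paper's argument: both apply the exactness of the weighted $p$-point Gauss rule to $(\beta v_{\cal T}')^2\in\mathbb P_{2p-2}$ to get $|v_{\cal T}|_G^2=\int_\Omega\beta (v_{\cal T}')^2\,dx\thicksim|v_{\cal T}|_1^2$ (you go further than the paper in explicitly checking, via \eqref{eq: lobatto 0}--\eqref{eq: lobatto n}, that $\beta v_{\cal T}'$ is a genuine polynomial on the interface element, a point the paper takes for granted). The seminorm bound $|\Pi_hv_{\cal T}|_{1,\cal T'}\thicksim|v_{\cal T}|_G$ via $[v_{i,j}]=A_{i,j}(\beta v_{\cal T}')(g_{i,j})$ and $A_{i,j}\thicksim h_i$ is also identical. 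Where you diverge is the treatment of $\|\Pi_hv_{\cal T}\|_{0,\cal T'}$: the paper writes $v_{i,j}=\sum_{m\le i}\sum_{n\le j}[v_{m,n}]$ and applies Cauchy--Schwarz globally to the whole string of jumps starting from $v_{1,0}=0$, obtaining the purely discrete Poincar\'e-type bound $\|\Pi_hv_{\cal T}\|_{0,\cal T'}\le p(b-a)\,|\Pi_hv_{\cal T}|_{1,\cal T'}$, which then closes via the seminorm estimate using only $|v_{\cal T}|_1$. You instead telescope only within each element, identify $v_{i,0}$ with the nodal value $v_{\cal T}(x_{i-1})$, and control $\sum_i h_i v_{\cal T}(x_{i-1})^2$ by an inverse inequality on an adjacent non-interface element. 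Both routes are valid; the paper's is shorter and never needs $\|v_{\cal T}\|_0$ or an inverse inequality, while yours makes the meaning of the coefficients $v_{i,j}$ transparent (nodal value plus a local quadrature partial sum) and sidesteps any worry about inverse estimates on the possibly degenerate subintervals of $\tau_k$ --- a worry the paper's global argument avoids automatically.
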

 \begin{proof}   Noticing that $ (\beta v'_{\cal T})^2\in \mathbb P_{2p-2}$ for all $v_{\cal T}\in U_{\cal T}$, and
the $p$-point Gauss quadrature is exact for all polynomials of degree up to $2p-1$,  we obtain
\begin{equation}\label{eq:2}
   \sum_{i=1}^N\int_{\tau_i}\beta(x)( v'_{\cal T})^2(x)dx=\sum_{i=1}^N\int_{\tau_i}w(x)(\beta v'_{\cal T})^2(x)dx=\sum_{i=1}^N\sum_{j=1}^pA_{i,j}(\beta v'_{\cal T})^2(g_{i,j}).
\end{equation}
   Then the first inequality \eqref{equi:norm} follows.

   Denote $v_{1,0}=0$.  It follows from a direct calculation that
\[
   v_{i,j}=\sum_{m=1}^{i}\sum_{n=0}^j[v_{m,n}],
\]
    and thus
\[
   v^2_{i,j}\le p(b-a)\sum_{m=1}^{N}\sum_{n=0}^ph_m^{-1}[v_{m,n}]^2.
\]
   Then
\[
   \|\Pi_hv_{\cal T}\|_{0,\cal T'}\le p(b-a)|\Pi_hv_{\cal T}|_{1,\cal T'}
\]
  On the other hand,   for all $v_{\cal T}\in U_{\cal T}$, the derivative $\beta v'_{\cal T}\in \mathbb P_{p-1}(\tau_i), i\in\bZ_N$, then
\[
\sum_{i=1}^N\sum_{j=1}^{p} A_{i,j}\beta v_{\cal T}'(g_{i,j})=\int_a^b (w\beta v_{\cal T}')(x) dx=( v_{\cal T})(b)-( v_{\cal T})(a)=0.
\]
Therefore,
\begin{eqnarray*}
v_{N,p-1}&=&\sum_{i=1}^N\sum_{j=1}^{p_i}[v_{i,j}]
=\sum_{i=1}^N\sum_{j=1}^{p} A_{i,j}\beta v'_{\cal T}(g_{i,j})-A_{N,p}\beta v'_{\cal T}(g_{N,p})=-A_{N,p}v'_{\cal T}(g_{N,p}).
\end{eqnarray*}
In other words, we also have
\begin{equation}\label{F:2}
[v_{N,p}]=v_{N,p}-v_{N,p-1}=A_{N,p}v'_{\cal T}(g_{N,p}).
\end{equation}
Consequently,
\begin{eqnarray*}
|\Pi_{h}v_{\cal T}|_{1,\cal T'}^2&=&\sum_{i=1}^{N}\sum_{j=1}^p h_i^{-1}[v_{i,j}]^2
=\sum_{i=1}^{N}\sum_{j=1}^p h_i^{-1} \left(A_{i,j}\beta v'_{\cal T}(g_{i,j})\right)^2.
\end{eqnarray*}
  Noticing that $A_{i,j}\thicksim h_i$, we get
\begin{eqnarray}\label{equ-norm}
|\Pi_{h}v_{\cal T}|_{1,\cal T'}\thicksim |v_{\cal T}|_{G} \thicksim |v_{\cal T}|_{1}.
\end{eqnarray}
  Then the second inequality of \eqref{equi:norm} follows.
\end{proof}

We are now ready to present the inf-sup condition and the continuity of $a(\cdot,\cdot)$.

\begin{theorem}
  For all $u\in H^1, v_{\cal T'}\in V_{\cal T'}$, there holds
\begin{equation}\label{conti}
   a(u,v_{\cal T'})\le M \|u\|_{G}\|v_{\cal T'}\|_{\cal T'}.
\end{equation}
  Moreover, if the mesh size $h$ is sufficiently small, then
  \begin{equation}\label{infsup}
\inf_{v_{\cal T}\in U_{\cal T}}\sup_{w_{T'} \in V_{\cal T'} }
\frac{|a (v_{\cal T} ,w_{\cal T'} )|}{\|v_{\cal T} \|_{G} \|w_{\cal T'}\|_{\cal T'}}\ge c_0,
\end{equation}
where both $M, c_0$ are constants independent of the mesh-size $h$.
Consequently,
\begin{equation}\label{totalestimate}
   \|u-u_{\cal T}\|_G\le \frac{M}{c_0}\inf_{v_{\cal T}\in U_{\cal T}}\|u-v_{\cal T}\|_{G}.
\end{equation}
\end{theorem}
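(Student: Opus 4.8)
The plan is to treat the three assertions in turn, after splitting the bilinear form \eqref{bilinear1} into its diffusion part (the flux sum over the Gauss points) and its lower-order part (the integrals of $\gamma u'+cu$). Throughout I will lean on the preceding lemma, which furnishes the norm equivalence and the bound $\|\Pi_hv_{\cal T}\|_{\cal T'}\lesssim\|v_{\cal T}\|_1$ in \eqref{equi:norm}.

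For the continuity \eqref{conti} I would bound the two parts separately by Cauchy--Schwarz. In the flux sum $\sum_{i,j}[v_{i,j}]\beta(g_{i,j})u'(g_{i,j})$ I insert the weights $h_i^{\pm1/2}$ and recognize $\big(\sum_{i,j}h_i^{-1}[v_{i,j}]^2\big)^{1/2}=|v_{\cal T'}|_{1,\cal T'}$ paired against $\big(\sum_{i,j}h_i(\beta u'(g_{i,j}))^2\big)^{1/2}$, the latter being $\sim|u|_G$ because $A_{i,j}\sim h_i$. For the lower-order part I would use $\big|\int_{g_{i,j}}^{g_{i,j+1}}(\gamma u'+cu)\big|\lesssim h_i^{1/2}\|\gamma u'+cu\|_{0,\tau'_{i,j}}$, pair $h_i^{1/2}|v_{i,j}|$ against $\|v_{\cal T'}\|_{0,\cal T'}$, and sum to obtain $\lesssim\|v_{\cal T'}\|_{0,\cal T'}\|u\|_1$. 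Adding the two bounds and using $\|v_{\cal T'}\|_{\cal T'}^2=|v_{\cal T'}|_{1,\cal T'}^2+\|v_{\cal T'}\|_{0,\cal T'}^2$ together with $\|u\|_1\le\|u\|_G$ gives \eqref{conti}.

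For the inf-sup \eqref{infsup} I would test a given $v_{\cal T}\in U_{\cal T}$ against $w_{\cal T'}=\Pi_hv_{\cal T}$. By the defining constraint \eqref{F:1} the flux sum collapses exactly to $\sum_{i,j}A_{i,j}(\beta v_{\cal T}'(g_{i,j}))^2=|v_{\cal T}|_G^2$. For the remaining term $R=\int_\Omega(\Pi_hv_{\cal T})(\gamma v_{\cal T}'+cv_{\cal T})\,dx$ the key idea is to write $\Pi_hv_{\cal T}=v_{\cal T}+(\Pi_hv_{\cal T}-v_{\cal T})$: the convection contribution of the $v_{\cal T}$ piece vanishes by skew-symmetry, $\int_\Omega\gamma v_{\cal T}v_{\cal T}'\,dx=\tfrac{\gamma}{2}\big(v_{\cal T}^2(b)-v_{\cal T}^2(a)\big)=0$, leaving $c\|v_{\cal T}\|_0^2$, while the quasi-interpolation error is controlled by $\|\Pi_hv_{\cal T}-v_{\cal T}\|_0\lesssim h|v_{\cal T}|_1$ (which follows from \eqref{F:1}, the exactness identity \eqref{eq:2} that makes full-element sums telescope to nodal increments, and a scaling/inverse-inequality argument). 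Hence $R=c\|v_{\cal T}\|_0^2+O(h|v_{\cal T}|_G^2)$. Since the preceding lemma gives $|v_{\cal T}|_G\sim|v_{\cal T}|_1\sim\|v_{\cal T}\|_G$ (using Poincar\'e, as $v_{\cal T}$ vanishes at the endpoints) and $\|\Pi_hv_{\cal T}\|_{\cal T'}\lesssim\|v_{\cal T}\|_G$, this will yield $a(v_{\cal T},\Pi_hv_{\cal T})\gtrsim\|v_{\cal T}\|_G^2$ for $h$ small, and dividing by $\|\Pi_hv_{\cal T}\|_{\cal T'}$ gives \eqref{infsup}.

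The main obstacle is the indefinite reaction term. When $c\ge0$ the contribution $c\|v_{\cal T}\|_0^2$ only helps and the estimate above is immediate once $h$ is small; but for general constant $c$ this term is of the same order as $|v_{\cal T}|_G^2$ and cannot be absorbed by shrinking $h$. To handle it I would run a duality (Schatz / Aubin--Nitsche) argument: using well-posedness and elliptic regularity of the adjoint interface problem, one upgrades the discrete G{\aa}rding inequality $a(v_{\cal T},\Pi_hv_{\cal T})\ge\tfrac12|v_{\cal T}|_G^2-C\|v_{\cal T}\|_0^2$ to the full inf-sup by establishing $\|v_{\cal T}\|_0\lesssim h\|v_{\cal T}\|_G$, so that the negative term is absorbed for $h$ sufficiently small; this is precisely where the smallness hypothesis enters, and it must be adapted to the low global regularity across the interface (the difference from the standard-FVM analysis of \cite{Cao;Zhang;Zou2012}). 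Finally, \eqref{totalestimate} is the standard Petrov--Galerkin quasi-optimality: the exact solution satisfies $a(u,w_{\cal T'})=(f,w_{\cal T'})$ for all $w_{\cal T'}\in V_{\cal T'}$ — by summation by parts together with the flux-continuity condition $\bigjump{\beta u'(\alpha)}=0$ of \eqref{eq: jump condition}, which makes the flux telescope across the interface element — so $a(u-u_{\cal T},w_{\cal T'})=0$; then for any $v_{\cal T}\in U_{\cal T}$ the inf-sup applied to $u_{\cal T}-v_{\cal T}$ and the continuity give $c_0\|u_{\cal T}-v_{\cal T}\|_G\le M\|u-v_{\cal T}\|_G$, and a triangle inequality followed by taking the infimum over $v_{\cal T}$ yields \eqref{totalestimate}.
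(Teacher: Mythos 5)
Your overall architecture coincides with the paper's: continuity \eqref{conti} by weighted Cauchy--Schwarz on the two pieces of \eqref{bilinear1}, the inf-sup \eqref{infsup} by testing $v_{\cal T}$ against $\Pi_h v_{\cal T}$ so that the flux sum collapses via \eqref{F:1} and \eqref{eq:2} to $\sum_{i,j}A_{i,j}(\beta v_{\cal T}')^2(g_{i,j})\ge\beta_0|v_{\cal T}|_1^2$, and quasi-optimality \eqref{totalestimate} from continuity, inf-sup and Galerkin orthogonality. The only genuine divergence is the lower-order term $I_2$. The paper sets $V(x)=\int_a^x(\gamma v_{\cal T}'+cv_{\cal T})\,ds$, sums by parts to get $I_2=-\sum_{i,j}[v_{i,j}]V(g_{i,j})$, recognizes this as the $p$-point Gauss rule applied to $-\int w\beta v_{\cal T}'V$, integrates by parts to extract $c\|v_{\cal T}\|_0^2$, and controls the quadrature defect by the classical error representation plus an inverse inequality, yielding $|E_i|\lesssim h_i^2|v_{\cal T}|_{1,\tau_i}^2$. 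You instead split $\Pi_hv_{\cal T}=v_{\cal T}+(\Pi_hv_{\cal T}-v_{\cal T})$ and rely on $\|\Pi_hv_{\cal T}-v_{\cal T}\|_0\lesssim h|v_{\cal T}|_1$; this is a legitimate and arguably cleaner alternative, but that bound is not free: the partial sums $v_{i,j}=\sum A_{m,n}\beta v_{\cal T}'(g_{m,n})$ reproduce $v_{\cal T}(g_{i,j})-v_{\cal T}(a)$ only after telescoping over \emph{whole} elements, so you must separately estimate the within-element remainder of the Gauss rule at interior partial sums. Both routes land on the same G{\aa}rding-type bound $I_2\ge c\|v_{\cal T}\|_0^2-Ch^2|v_{\cal T}|_1^2$.

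The one step that would fail is your proposed repair for an indefinite reaction term: the inequality $\|v_{\cal T}\|_0\lesssim h\|v_{\cal T}\|_G$ cannot hold uniformly for all $v_{\cal T}\in U_{\cal T}$ (for a generic discrete function both norms are $O(1)$, so the right-hand side vanishes as $h\to 0$ while the left does not). Schatz/Aubin--Nitsche duality delivers an $h$-weighted $L^2$ bound only for the Galerkin \emph{error} $u-u_{\cal T}$, because it uses the orthogonality $a(u-u_{\cal T},\cdot)=0$ against the adjoint problem; it says nothing about arbitrary members of the trial space and therefore cannot upgrade the discrete G{\aa}rding inequality to \eqref{infsup} in the manner you sketch (the standard fix is a compactness/contradiction argument or a modified test function, not this). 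The paper does not attempt the indefinite case at all: its concluding step $a(v_{\cal T},\Pi_hv_{\cal T})\ge\tfrac12\min\{\beta_0,c\}\|v_{\cal T}\|_1^2$ tacitly assumes $c>0$ (for $c=0$ one would instead invoke Poincar\'e, since $v_{\cal T}(a)=v_{\cal T}(b)=0$). Under that implicit sign assumption your extra machinery is unnecessary, and as stated it does not work; everything else in your proposal is sound.
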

\begin{proof} By \eqref{bilinear1} and the Cauchy-Schwartz inequality, we have
\begin{eqnarray*}
   a(u,v_{\cal T'})
   &\le& |u|_{G}\left(\sum_{i=1}^N\sum_{j=1}^p\frac{\beta}{A_{i,j}}[v_{i,j}]^2\right)^{\frac 12}+\max(|\gamma|,|c|)\|u\|_{1}\left(\sum_{i=1}^N\sum_{j=1}^p h_iv_{i,j}^2\right)^{\frac 12}\\
      &\le & M\|u\|_G\|v_{\cal T'}\|_{\cal T'},
\end{eqnarray*}
   where the constant $M$ only depends on $\beta, \gamma, c$. Then \eqref{conti} follows.

   Recall the definition of the linear mapping $\Pi_h$, then we have
\[
a(v_{\cal T},\Pi_{h}v_{\cal T})=I_1+I_2,\ \ \forall v_{\cal T}\in U_{\cal T}
\]
  with
\[
I_1=\sum_{i=1}^{N}\sum_{j=1}^{p} [v_{i,j}]\beta(g_{i,j})v'_{\cal T}(g_{i,j}),\quad I_2=\sum_{i=1}^{N}\sum_{j=1}^{p_i} v_{i,j}\int_{g_{i,j}}^{g_{i,j+1}}\big(\gamma v'_{\cal T}(x)+cv_{\cal T}(x)\big)dx .
\]
   In light of \eqref{eq:2}, we have
\begin{eqnarray*}
I_1=\sum_{i=1}^{N}\sum_{j=1}^{p} A_{i,j}(\beta v'_{\cal T})^2(g_{i,j})
\ge
\beta_0|v_{\cal T}|_{1}^2.
\end{eqnarray*}
  To estimate $I_2$, we let $V(x)=\int_a^x \left( \gamma v'_{\cal T}(s)+cv_{\cal T}(s) \right) ds $ and
  denote by
\begin{eqnarray*}
    E_{i}&=&\int_{x_{i-1}}^{x_{i}}w(x)\beta(x) v'_{\cal T}(x)V(x)dx-\sum_{j=1}^{p} A_{i,j} (\beta v'_{\cal T})(g_{i,j})V(g_{i,j}),
\end{eqnarray*}
the error of Gauss quadrature in the interval $\tau_i$, $i\in\bZ_N$.
Then
\begin{eqnarray*}
I_2&=&- \sum_{i=1}^{N}\sum_{j=1}^{p}  [v_{i,j}] V(g_{i,j})= - \int_a^b w(x)\beta(x)v'_{\cal T}(x)V(x)dx + \sum_{i=1}^N E_i\\
&=& \int_a^b \left(\gamma v'_{\cal T}+cv_{\cal T} \right)v_{\cal T}(x) dx+\sum_{i=1}^{N} E_{i}
=\int_a^b cv^2_{\cal T}(x) dx+\sum_{i=1}^{N} E_{i},
\end{eqnarray*}
  where in the second and last steps, we have used the integration by parts and the fact that
    $v_{\cal T}(a)=v_{\cal T}(b)=0$.
On the other hand, the error of Gauss quadrature can be represented as (see, e.g.,
\cite{DavisRabinowitz1984}, p98, (2.7.12)))
\[
E_{i}=\frac{h_i^{2p+1}(p!)^4}{(2p+1)[(2p)!]^3}(\beta v'_{\cal T}V)^{(2p)}(\xi_i),
\]
where $\xi_i\in \tau_i$.
By the  Leibnitz formula of derivatives, we have
\begin{equation*}
    \left|(\beta v'_{\cal T}V)^{(2p)}(\xi_i)\right|\le \sum_{k=p+1}^{2p} \binom{2p}{k}\left| (\gamma v'_{\cal T}+c v_{\cal T})^{(k-1)} (\beta v'_{\cal T})^{(2p-k)}(\xi_i)\right| \le c_1 \|v_{\cal T}\|_{p,\infty,\tau_i}^2
\end{equation*}
with
\[
   c_1=\max\{\beta,\gamma, c\}\sum_{k=p+1}^{2p} \binom{2p}{k}.
\]
   Noticing that $\beta v_{\cal T}^{(k)}\in\mathbb P_p, k\in\bZ_p$,  the inverse inequality holds and thus
\[
  \|\beta v_{\cal T}\|_{p,\infty,\tau_i} \lesssim h_i^{-(p-\frac12)}|\beta v_{\cal T}|_{1,\tau_i},\ \ p\ge 1.
\]
   Then
\begin{equation*}\label{quad err}
|E_{i}| \leq \frac{c_1(p!)^4}{(2p+1)[(2p)!]^3} h_i^2 |\beta v_{\cal T}|_{1,\tau_i}^2.
\end{equation*}
  Plugging the estimate for $E_i$ into the formula of $I_2$ yields
\[
    I_2 \geq c \|v_{\cal T}\|^2_{0}-\frac{c_1(p!)^4}{(2p+1)[(2p)!]^3}h^2\left|v_{\cal T}\right|^2_{1}.
\]
Then for sufficiently small $h$, we have
\begin{eqnarray}\label{coer:1}
    a(v_{\cal T},\Pi_{h}v_{\cal T}) &\ge& \frac{\beta_0}{2}|v_{\cal T}|_{1}^2 +\frac{c}{2}\|v_{\cal T}\|_{0}^2\ge \frac12\min\{\beta_0,c\}\|v_{\cal T}\|_{1}^2.
\end{eqnarray}
  In light of \eqref{equi:norm}-\eqref{equ-norm},  there holds  for any $v_{\cal T}\in U_{\cal T}$,
\[
\sup_{w_{\cal T'}\in V_{\cal T'}} \frac{a(v_{\cal T},w_{\cal T'})}{\|w_{\cal T'}\|_{\cal T'}}
\ge \frac{a(v_{\cal T},\Pi_hv_{\cal T})}{\|\Pi_{h}v_{\cal T}\|_{\cal T'}}\ge c_0 \|v_{\cal T}\|_{G},
\]
where $c_0$  is a constant independent of the mesh size $h$.
The inf-sup condition \eqref{infsup} then follows.
Combining the continuity \eqref{conti}, inf-sup condition \eqref{infsup}, and the orthogonality of IFVM,  we derive \eqref{totalestimate}
 following similar arguments as in \cite{BabuskaAziz1972} or \cite{XuZikatanov2003}.
\end{proof}

\begin{remark}
    As we may observe in the proof of the above theorem,  \eqref{infsup} always holds no matter where the interface is. 
    In other words, the inf-sup condition of the IFVM is independent of the location of the interface point. However, the error bound $\frac{M}{c_0}$ in \eqref{totalestimate} 
    is dependent on the ratio $\rho=\frac{\beta_{\max}}{\beta_{\min}}$. 
\end{remark}

A direct consequence of the above theorem is the following error estimate for the IFVM.

\begin{corollary}\label{coro:11}
     Let $\mathcal{T} = \{\tau_i\}_{i=1}^N$ be a partition of $\Omega$ such that the interface $\alpha\in \tau_k$.  Let $u_{\cal T}\in  U_{\cal T}$ be the IFV solution of \eqref{eq: IFE method}, and $u\in \tilde W_\beta^{p+1,\infty}(\Omega)$ be the exact solution of \eqref{eq: DE} - \eqref{eq: jump condition}. Then there exists a constant $C$, depending 
     on $\rho=\frac{\beta_{\max}}{\beta_{\min}}$, $\gamma$, $c$ and $p$,  such that 
\begin{equation}\label{optimal}
   |u-u_{\cal T}|_1\le C h^{p}\|u\|_{p+1,\infty}. 
\end{equation}
\end{corollary}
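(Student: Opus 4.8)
The plan is to derive the error estimate \eqref{optimal} as a direct consequence of the quasi-optimality bound \eqref{totalestimate} established in the preceding theorem. The theorem gives us
\[
   \|u-u_{\cal T}\|_G\le \frac{M}{c_0}\inf_{v_{\cal T}\in U_{\cal T}}\|u-v_{\cal T}\|_{G},
\]
so the entire task reduces to bounding the best-approximation error $\inf_{v_{\cal T}}\|u-v_{\cal T}\|_G$ by $Ch^p\|u\|_{p+1,\infty}$, and then observing that $|u-u_{\cal T}|_1\le\|u-u_{\cal T}\|_G$ by the definition of the energy norm. First I would choose a concrete quasi-interpolant $v_{\cal T}\in U_{\cal T}$ of the exact solution $u$ and estimate $\|u-v_{\cal T}\|_G$; the natural choice is the interpolant of $u$ into the generalized-Lobatto trial space, since on the interface element these basis functions are precisely engineered to satisfy the jump conditions \eqref{eq: jump condition}, and on noninterface elements they reduce to the standard Lobatto interpolation.

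The core of the argument is a standard-approximation-theory estimate carried out piecewise. The energy norm decomposes as $\|v\|_G^2=|v|_G^2+\|v\|_1^2$, where $|v|_G^2=\sum_{i,j}A_{i,j}(\beta v'(g_{i,j}))^2$. I would bound each piece separately. For the $\|\cdot\|_1$ part I would use classical polynomial-interpolation error bounds on each noninterface element $\tau_i$, yielding $|u-v_{\cal T}|_{1,\tau_i}\lesssim h_i^p\|u\|_{p+1,\infty,\tau_i}$ and $\|u-v_{\cal T}\|_{0,\tau_i}\lesssim h_i^{p+1}\|u\|_{p+1,\infty,\tau_i}$, then sum over $i$. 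For the Gauss-point seminorm $|u-v_{\cal T}|_G$, since $A_{i,j}\thicksim h_i$ (used already in the previous proof), I would bound $A_{i,j}(\beta(u-v_{\cal T})'(g_{i,j}))^2\lesssim h_i\|\beta(u-v_{\cal T})'\|_{0,\infty,\tau_i}^2$ by the pointwise interpolation error, which is again of order $h_i^p\|u\|_{p+1,\infty,\tau_i}$; summing over the $p$ Gauss points and over all elements recovers the same $h^p$ rate. The shape-regularity assumption lets me replace each $h_i$ by $h$ uniformly, and the dependence of the constant on $\rho=\beta_{\max}/\beta_{\min}$ enters through the factors of $\beta$ in $|v|_G$ and through $M/c_0$, consistent with the Remark.

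The main obstacle is the estimate on the interface element $\tau_k$, where $u$ is only piecewise smooth and the trial functions $\phi_{k,n}$ are piecewise polynomials rather than genuine polynomials. I would invoke the interpolation-error theory for the generalized Lobatto basis developed in \cite{2017CaoZhangZhang}, which guarantees that the generalized interpolant reproduces the same $O(h^{p+1})$ and $O(h^p)$ orders in $L^2$ and $H^1$ respectively on the interface element, measured in the broken norm $\|\cdot\|_{m,q,\tau_k}$ defined via $\Omega^\pm$. The key point making this work is that the hypothesis $u\in\tilde W_\beta^{p+1,\infty}(\Omega)$ supplies exactly the flux-jump conditions $\bigjump{\beta u^{(j)}(\alpha)}=0$ for $j\le p+1$ that the generalized basis is built to match, so no cross-interface regularity of the ordinary derivatives is needed. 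Once the interface element is handled in the broken norm, the Gauss-quadrature weights on $\tau_k$ are still $\thicksim h_k$ and the pointwise-to-integral bounds go through verbatim, so assembling the global estimate and combining with \eqref{totalestimate} completes the proof.
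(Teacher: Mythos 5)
Your proposal takes essentially the same route as the paper: the paper's proof is exactly the chain $|u-u_{\cal T}|_1\le\|u-u_{\cal T}\|_G\le \frac{M}{c_0}\|u-u_I\|_G$ via \eqref{totalestimate}, followed by an appeal to the approximation theory of the immersed finite element space \cite{2009AdjerridLin} for the interpolant bound. The only difference is that you spell out the element-by-element interpolation estimates (including the Gauss-point seminorm and the interface element via the generalized Lobatto basis) that the paper delegates to a citation; this is correct and consistent.
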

\begin{proof}
Noticing that $\|\cdot\|_1\le \|\cdot\|_{G}$, we have from \eqref{totalestimate}
\[
   \|u-u_{\cal T}\|_1\le  \|u-u_{\cal T}\|_G\le \frac{M}{c_0}\inf_{v_{\cal T}\in U_{\cal T}}\|u-v_{\cal T}\|_G\le \frac{M}{c_0}\|u-u_I\|_G,
\]
  where $u_I$ is some interpolation function of $u$. Then \eqref{optimal} follows from the approximation theory of the immersed finite element space \cite{2009AdjeridLin}.
\end{proof}

\section{Superconvergence analysis}
In this section, we derive some superconvergence properties of IFVM. First we introduce a special Guass-Lobatto projection, which is of great importance in the superconvergence analysis.  For any $u\in \tilde W_{\beta}^{m,q}(\Omega), m\ge 1$, we have the following (generalized) Lobatto expansion of $u$ on each element $\tau_i$ \cite{2017CaoZhangZhang}:
\begin{equation}\label{eq: expansion on noninterface elem}
  u(x) |_{\tau_i}= \sum_{n = 0}^\infty u_{i,n}\phi_{i,n}(x),
\end{equation}
where
\[
   u_{i,0} = u(x_{i-1}), \ \  u_{i,1} = u(x_{i}),\ \ u_{i,n}=\frac{\displaystyle\int_{\tau_i} \beta u'(x)\phi_{i,n}'(x)dx}{\displaystyle\int_{\tau_i} \beta\phi_{i,n}'(x)\phi_{i,n}'(x)dx}.
\]
  We define the  Gauss-Lobatto projection $\mathcal{I}_h: \tilde W_{\beta}^{m,q}(\Omega) \to  U_{\mathcal{T}}$ as follows
\begin{equation}\label{eq: interpolation}
  (\mathcal{I}_hu)|_{\tau_i} =
      \sum\limits_{n=0}^p u_{i,n}\phi_{i,n}(x).
\end{equation}
%
Let $ \tilde U_{\cal T}= \{v\in C(\Omega): v|_{\tau_i} \in \text{span}\{\phi_{i,n}: n = 0,1,\cdots, p\}, v(a)=0\}$. Then we  define a special function $\omega_{\cal T}\in \tilde U_{\cal T}$ as follows.
\begin{equation}\label{corr:w}
    \beta\omega'_{\cal T}(g_{i,j})=\beta(u-{\cal I}_h u)'(g_{i,j})-\gamma (u-{\cal I}_hu)(g_{i,j}),\ \ (i,j)\in\bZ_{N}\times\bZ_p.
\end{equation}

\begin{lemma}  Let $u\in \tilde W^{2p+1,\infty}_{\beta}(\Omega)$ and  $\omega_{\cal T}\in   \tilde U_{\cal T}$ be the special function defined by \eqref{corr:w}.  Then $\omega_{\cal T}$ is well-defined, and
for all $p\ge 2$
 \begin{equation}\label{esti:w}
    \|\omega_{\cal T}\|_{0,\infty}\le C h^{p+2}\|u\|_{2p+1,\infty},
\end{equation}
 where $C$ is a positive constant  dependent on the coefficients $\beta$ and $\gamma$.
 \end{lemma}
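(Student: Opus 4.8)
The plan is to read $\omega_{\cal T}$ off a telescoping Gauss-quadrature identity and then recover one extra power of $h$ from the orthogonality built into the (generalized) Lobatto basis. For well-definedness I would first note that on each $\tau_i$ the flux $\beta\omega_{\cal T}'$ is a genuine polynomial of degree at most $p-1$: the two lowest modes satisfy $\beta\phi_{i,0}',\beta\phi_{i,1}'\equiv\text{const}$, while for $n\ge2$ the function $\beta\phi_{i,n}'$ is a polynomial of degree $n-1$ (equal to $\frac{2}{h_i}L_{n-1}$ on the interface element $\tau_k$). Since $\dim\tilde U_{\cal T}=Np$ matches the number of constraints in \eqref{corr:w}, it suffices to kill the homogeneous problem: if $\beta\omega_{\cal T}'(g_{i,j})=0$ for all $j\in\bZ_p$, then a degree-$(p-1)$ polynomial vanishes at the $p$ distinct (generalized) Gauss points of $\tau_i$ and hence vanishes identically, so $\omega_{\cal T}$ is piecewise constant; continuity together with $\omega_{\cal T}(a)=0$ forces $\omega_{\cal T}\equiv0$. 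Thus the square system \eqref{corr:w} is uniquely solvable.

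For the bound, the central observation is that since $\beta\omega_{\cal T}'$ has degree $\le p-1$ the $p$-point rule is exact, so $\omega_{\cal T}(x_i)-\omega_{\cal T}(x_{i-1})=\int_{\tau_i}w\beta\omega_{\cal T}'\,dx=\sum_{j=1}^pA_{i,j}\beta\omega_{\cal T}'(g_{i,j})$; summing and using $\omega_{\cal T}(a)=0$ gives $\omega_{\cal T}(x_m)=\sum_{i\le m}\sum_jA_{i,j}r_{i,j}$, where $r_{i,j}$ denotes the right-hand side of \eqref{corr:w}. I would split each increment into a flux part $\sum_jA_{i,j}\beta(u-\mathcal{I}_hu)'(g_{i,j})$ and a zeroth-order part $-\gamma\sum_jA_{i,j}(u-\mathcal{I}_hu)(g_{i,j})$. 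Exactness of $Q_p$ on the degree-$(p-1)$ polynomial $\beta(\mathcal{I}_hu)'$, together with $\int_{\tau_i}(\mathcal{I}_hu)'=u(x_i)-u(x_{i-1})=\int_{\tau_i}u'$, collapses the flux part to minus the Gauss error of $\beta u'$; because $u\in\tilde W^{2p+1,\infty}_\beta$ makes $\beta u'\in C^{2p}$ across $\alpha$, this error is $O(h_i^{2p+1}\|u\|_{2p+1,\infty})$ on every element. The zeroth-order part reduces similarly to $-\gamma\int_{\tau_i}w(u-\mathcal{I}_hu)\,dx$ up to another $O(h_i^{2p+1})$ quadrature error.

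The gain then comes from $\int_{\tau_i}w(u-\mathcal{I}_hu)\,dx=\sum_{n\ge p+1}u_{i,n}\int_{\tau_i}w\phi_{i,n}\,dx$. On a noninterface element $w$ is constant and a direct computation gives $\int_{-1}^1\psi_n\,d\xi=0$ for $n\ge3$, so every surviving index $n\ge p+1\ge3$ (this is where $p\ge2$ enters) kills its term and the integral vanishes exactly; on the single interface element I would keep only the crude bound $O(h_k^{p+2})$, coming from $|u_{k,p+1}|\lesssim h_k^{p+1}\|u\|_{p+1,\infty,\tau_k}$ times $\int_{\tau_k}w\phi_{k,p+1}=O(h_k)$. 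Hence $\omega_{\cal T}(x_m)$ equals one interface contribution of size $O(h^{p+2})$ plus at most $N=O(h^{-1})$ quadrature errors of size $O(h^{2p+1})$, i.e. $O(h^{2p})$; since $p\ge2$ gives $2p\ge p+2$, this yields $|\omega_{\cal T}(x_m)|\lesssim h^{p+2}\|u\|_{2p+1,\infty}$ at every node. To pass from nodes to the whole interval I would use $\omega_{\cal T}(x)=\omega_{\cal T}(x_{i-1})+\int_{x_{i-1}}^x\omega_{\cal T}'\,ds$: the leading Lobatto mode of $(u-\mathcal{I}_hu)'$ is proportional to $L_p$ (resp. $P_p$) and so vanishes at the Gauss points, whence each $r_{i,j}=O(h^{p+1})$ and $\|\omega_{\cal T}'\|_{0,\infty,\tau_i}\lesssim h^{p+1}$ by norm equivalence on degree-$(p-1)$ polynomials, giving an interior variation $O(h^{p+2})$. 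Combining the nodal and interior bounds proves \eqref{esti:w}.

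The main obstacle is the supercloseness cancellation itself: a termwise $L^\infty$ estimate of $u-\mathcal{I}_hu$ only delivers $O(h^{p+1})$, so the extra order must be extracted by showing that the accumulated nodal increments telescope to tiny quadrature errors plus a lone interface term. This hinges on the exact orthogonality $\int_{\tau_i}w\phi_{i,n}=0$ for $n\ge3$ on regular elements (precisely where $p\ge2$ is needed) and on the accounting that the $O(h^{-1})$ quadrature errors sum only to $O(h^{2p})\le O(h^{p+2})$. A secondary difficulty is justifying the $O(h_i^{2p+1})$ Gauss error on the interface element, where the weight $w$ is discontinuous; this requires the full set of jump conditions encoded in $\tilde W^{2p+1,\infty}_\beta$, which guarantees that $\beta u'$ is genuinely $C^{2p}$ there so that the standard quadrature error formula applies.
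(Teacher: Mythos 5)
Your proposal is correct and follows essentially the same route as the paper: unique solvability of the square system for the piecewise degree-$(p-1)$ flux, exactness of the $p$-point (generalized) Gauss rule to turn nodal increments of $\omega_{\cal T}$ into quadrature errors plus $\int_{\tau_i}w(u-\mathcal{I}_hu)$, the orthogonality $\int_{-1}^1\psi_n\,d\xi=0$ for $n\ge3$ (hence $p\ge2$) to kill the non-interface contributions, the single $O(h^{p+2})$ interface term, the $N\cdot O(h^{2p+1})=O(h^{2p})\le O(h^{p+2})$ accounting, and the passage from nodes to the interior via $\|\beta\omega_{\cal T}'\|_{0,\infty}\lesssim h^{p+1}$ from the Gauss-point superconvergence of $(u-\mathcal{I}_hu)'$. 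The only cosmetic difference is that you split the flux and zeroth-order parts before invoking the quadrature error formula, whereas the paper applies it to the combined integrand; the estimates are the same.
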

\begin{proof} First, $\beta \omega'_{\cal T}\in\mathbb P_{p-1}$ is uniquely determined by the first condition of \eqref{corr:w} and thus $\omega'_{\cal T}$ is well-defined. Since $\omega_{\cal T}$ is continuous satisfying $\omega_{\cal T}(a)=0$, then $\omega_{\cal T}$ is uniquely determined.
  By the approximation property of ${\cal I}_h$ (see, \cite{2017CaoZhangZhang}), we get
\[
   \|u-{\cal I}_hu\|_{0,\infty}\lesssim h^{p+1}|u|_{p+1,\infty},\ \
   \beta(u-{\cal I}_hu)'(g_{i,j})\lesssim h^{p+1}|u|_{p+2,\infty},
\]
  which gives
\[
   \|\beta \omega'_{\cal T}\|_{0,\infty,\tau_i}\lesssim h^{p+1}\|u\|_{p+2,\infty}.
\]
  On the other hand, by Gauss quadrature,
\begin{eqnarray*}
    \omega_{\cal T}(x_i)-\omega_{\cal T}(x_{i-1})&=&\int_{\tau_i}\omega'_{\cal T}(x)dx=\sum_{j=1}^pA_{i,j}(\beta\omega'_{\cal T})(g_{i,j})\\
    &=&\sum_{j=1}^pA_{i,j}\left(\beta(u-{\cal I}_hu)'+\gamma(u-{\cal I}_hu)\right)(g_{i,j})\\
    &=&\int_{\tau_i}\frac{1}{\beta}\left(\beta(u-{\cal I}_hu)'+\gamma (u-{\cal I}_hu)\right)(x)dx-E_i,
\end{eqnarray*}
  where
\[
   E_i=\int_{\tau_i}\frac{1}{\beta}\left(\beta(u-{\cal I}_hu)'+\gamma (u-{\cal I}_hu)\right)(x)dx-\sum_{j=1}^pA_{i,j}\left(\beta(u-{\cal I}_hu)'+\gamma(u-{\cal I}_hu)\right)(g_{i,j})
\]
  denotes the error of Gauss quadrature in $\tau_i$. By the orthogonality of the Lobotto polynomials,
  we have $(u-{\cal I}_hu)\bot\mathbb P_0(\tau_i), i\neq k$, then
\[
  \omega_{\cal T}(x_i)-\omega_{\cal T}(x_{i-1}) =
  \left\{
    \begin{array}{ll}
      -E_i, & \text{if}~ i\neq k, \vspace{1mm}\\
      \int_{\tau_k}\frac{\gamma}{\beta}(u-{\cal I}_hu)(x)dx-E_k, & \text{if}~ i= k.
    \end{array}
  \right.
\]
    Noticing that
\[
E_{i}=\frac{h_i^{2p+1}(p!)^4}{(2p+1)[(2p)!]^3}(\beta (u-{\cal I}_hu)'+\gamma (u-{\cal I}_hu))^{(2p)}(\xi_i),\ \ \xi_i\in\tau_i,
\]
  we have
\[
   |E_i|\lesssim h^{2p+1}\|u\|_{2p+1,\infty,\tau_i},
\]
   which yields
\begin{eqnarray*}
|\omega_{\cal T}(x_i)-\omega_{\cal T}(x_{i-1})|\lesssim h^{2p+1}\|u\|_{2p+1,\infty},&~~& i\neq k,  \\
|\omega_{\cal T}(x_k)-\omega_{\cal T}(x_{k-1})|\lesssim h^{p+2}\|u\|_{2p+1,\infty}.
\end{eqnarray*}
  Using the fact $\omega_{\cal T}(a)=\omega_{\cal T}(x_0)=0$, we have for all $i\in\bZ_N$
\[
    |\omega_{\cal T}(x_i)|\lesssim h^{p+2}\|u\|_{2p+1,\infty},\ \ p\ge 2.
\]
 Then for all $x\in\tau_i$,
\[
   |\omega_{\cal T}(x)|=\left|\omega_{\cal T}(x_{i-1})+\int_{x_{i-1}}^{x}\omega'_{\cal T}(x) dx\right|\lesssim h^{p+2}\|u\|_{2p+1,\infty}.
\]
  This finishes our proof.
\end{proof}

    We define a linear interpolant of $\omega_{\cal T}$ on $[a,b]$ as follows.
\begin{equation}\label{interp:w}
    \omega_{I}(x)=\omega_{\cal T}(b)C_b\int_{a}^x\frac{1}{\beta(x)}dx,
\end{equation}
  where $C_b=(\frac{\alpha-a}{\beta^-}+\frac{b-\alpha}{\beta^+})^{-1}$.
 It is easy to check that
 \[
     \omega_{I}(a)=0=\omega_{\cal T}(a),\ \ \omega_{I}(b)=\omega_{\cal T}(b),\ \
     \bigjump{ \omega_I(\alpha)} = 0,\ \ \bigjump{\beta \omega_I^{(j)}(\hat\alpha)} = 0,\ \forall~ j = 1,2, \cdots, p.
\]
  Apparently, $\omega_I\in\tilde U_{\cal T}$ and $\omega_{\cal T}-\omega_{I}\in U_{\cal T}$. Moreover,
  there holds
\begin{equation}\label{est:wi}
    |\omega_I(x)|+|\beta\omega'_I(x)|\lesssim |C_b\omega_{\cal T}(b)|\lesssim \|\omega_{\cal T}\|_{0,\infty}\lesssim h^{p+2}\|u\|_{2p+1,\infty},\ \ \forall x\in\Omega.
\end{equation}

 Now we are ready to show our superconvergence properties of the IFVM.

 \begin{theorem}\label{theo:11}
Let $\mathcal{T} = \{\tau_i\}_{i=1}^N$ be an partition of $\Omega$ such that the interface $\alpha\in \tau_k$.  Let $u_{\cal T}\in U_{\cal T}$ be the IFV solution of \eqref{eq: IFE method} with $p\ge 2$, and $u\in \tilde W_\beta^{2p+1,\infty}(\Omega)$ be the exact solution of \eqref{eq: DE} - \eqref{eq: jump condition}. Then
\begin{itemize}
   \item The IFV solution $u_{\cal T}$ is superclose to the Gauss-Lobatto projection of the exact solution, i.e.,
\begin{equation}\label{superclose}
   \|u_{\cal T}-{\cal I}_hu\|_{0,\infty}=O(h^{p+2})
\end{equation}
   \item The function value approximation of $u_{{\cal T}}$ is superconvergent at roots of $\phi_{i,p+1}$, with an order of $p+2$. That is,
   \begin{equation}\label{super:lob}
        (u-u_{\cal T})(l_{i,j})=O(h^{p+2}),
   \end{equation}
     where $l_{i,j}$ are zeros of $\phi_{i,p+1}$.

   \item The flux approximation of $\beta u_{\cal T}'$ is superconvergent with an order
   of $p+1$
   at the Gauss points $g_{i,j}, (i,j)\in\bZ_N\times\bZ_p$, \emph{i.e.},
\begin{equation}\label{super:gau}
    \beta(u-u_{\cal T})'(g_{i,j}) =O(h^{p+1}).
\end{equation}
 \item For diffusion only equation, i.e., $\gamma=c=0$, there hold
\begin{eqnarray}\label{sup:1}
   &&\beta(u-u_{\cal T})'(g_{i,j})=O(h^{2p}),\ \ \ (u-u_{\cal T})(x_i)=O(h^{2p}),\\\label{sup:2}
   &&(u-u_{\cal T})(x_i)-(u-u_{\cal T})(x_{i-1})=O(h^{2p+1}).
\end{eqnarray}
\end{itemize}
  Here  the hidden constants are dependent on the ratio $\rho=\frac{\beta_{\max}}{\beta_{\min}}$, $\gamma$, $c$ and $p$. 
\end{theorem}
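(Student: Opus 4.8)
The plan is to treat the supercloseness estimate \eqref{superclose} as the backbone: once $\|u_{\cal T}-{\cal I}_hu\|_{0,\infty}=O(h^{p+2})$ is known, the pointwise statements \eqref{super:lob} and \eqref{super:gau} follow by adding back the appropriate projection errors and using the orthogonality built into ${\cal I}_h$. To prove \eqref{superclose} I would use the auxiliary functions constructed just before the theorem and split
\[
   u_{\cal T}-{\cal I}_hu=(\omega_{\cal T}-\omega_I)+e,\qquad e:=u_{\cal T}-{\cal I}_hu-(\omega_{\cal T}-\omega_I)\in U_{\cal T}.
\]
The first summand is already controlled: \eqref{esti:w} and \eqref{est:wi} give $\|\omega_{\cal T}-\omega_I\|_{0,\infty}\lesssim h^{p+2}\|u\|_{2p+1,\infty}$. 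Since $e\in U_{\cal T}\subset H_0^1(\Omega)$, the one-dimensional embedding $H^1\hookrightarrow L^\infty$ (with an $h$-independent constant) reduces the task to proving $\|e\|_G\lesssim h^{p+2}$.

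For that I would invoke the inf-sup condition \eqref{infsup} to pick $w_{\cal T'}\in V_{\cal T'}$ with $\|e\|_G\lesssim a(e,w_{\cal T'})/\|w_{\cal T'}\|_{\cal T'}$. Galerkin orthogonality $a(u-u_{\cal T},w_{\cal T'})=0$ and linearity give $a(e,w_{\cal T'})=a(G,w_{\cal T'})$ with $G:=u-{\cal I}_hu-\omega_{\cal T}+\omega_I$. The essential cancellation is by design: \eqref{corr:w} forces $\beta G'(g_{i,j})=\gamma(u-{\cal I}_hu)(g_{i,j})+\beta\omega_I'(g_{i,j})$, removing the leading flux mismatch at the Gauss points. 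I would then apply summation by parts exactly as in the treatment of $I_2$ in the inf-sup proof to get $a(G,w_{\cal T'})=\sum_{i,j}[w_{i,j}]\big(\beta G'(g_{i,j})-\tilde V(g_{i,j})\big)$ with $\tilde V(x)=\int_a^x(\gamma G'+cG)$. The two $\gamma(u-{\cal I}_hu)(g_{i,j})$ terms cancel, leaving contributions governed by $\beta\omega_I'(g_{i,j})=O(h^{p+2})$, by the $O(h^{p+2})$ size of $\omega_{\cal T},\omega_I$, and by $\int_a^{g_{i,j}}(u-{\cal I}_hu)$. Here the orthogonality $(u-{\cal I}_hu)\perp\mathbb P_0(\tau_i)$ on every complete element (valid on interface and non-interface elements once $p\ge2$, since $\int_{\tau_i}\phi_{i,n}=0$ for $n\ge p+1\ge3$) annihilates all full-element integrals, leaving only one partial-element integral of size $O(h^{p+2})$; a Cauchy--Schwarz step against $|w_{\cal T'}|_{1,\cal T'}$ then yields the bound. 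This paragraph is the main obstacle: the residual must be genuinely $O(h^{p+2})$ and not $O(h^{p+1})$, which relies delicately on the weighted orthogonality surviving across the interface element so that no spurious lower-order term appears there.

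With \eqref{superclose} available, I would split $u-u_{\cal T}=(u-{\cal I}_hu)+({\cal I}_hu-u_{\cal T})$. At a root $l_{i,j}$ of $\phi_{i,p+1}$ the leading projection-error term $u_{i,p+1}\phi_{i,p+1}(l_{i,j})$ vanishes, so $(u-{\cal I}_hu)(l_{i,j})=O(h^{p+2})$, while the second term is $O(h^{p+2})$ by \eqref{superclose}; this gives \eqref{super:lob}. For \eqref{super:gau} note $\beta\phi_{i,p+1}'\propto L_p$ (resp. $P_p$), which vanishes at the generalized Gauss points, so $\beta(u-{\cal I}_hu)'(g_{i,j})=O(h^{p+1})$; and $\beta({\cal I}_hu-u_{\cal T})'(g_{i,j})=O(h^{p+1})$ follows from \eqref{superclose} and the inverse inequality on $U_{\cal T}$.

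Finally, for the diffusion-only problem ($\gamma=c=0$) the conservation identity \eqref{conserve} and the integrated exact equation read, for $v=u_{\cal T}$ and $v=u$ respectively, $\beta v'(g_{i,j})-\beta v'(g_{i,j+1})=\int_{g_{i,j}}^{g_{i,j+1}}f$, so subtracting shows $\beta(u-u_{\cal T})'(g_{i,j})$ is independent of $(i,j)$; call this constant $\kappa$. To evaluate $\kappa$ I would use $0=(u-u_{\cal T})(b)-(u-u_{\cal T})(a)=\int_a^b w\,\beta(u-u_{\cal T})'$, split it element-wise, and observe that on each $\tau_i$ the $(u-{\cal I}_hu)$ part integrates to zero by nodal interpolation, while $\beta({\cal I}_hu-u_{\cal T})'\in\mathbb P_{p-1}$ is integrated exactly by the $p$-point Gauss rule. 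The residual $O(h^{2p+1})$ quadrature error then forces $\kappa=O(h^{2p})$, the first identity in \eqref{sup:1}. Integrating the flux error over one element and telescoping yields the $O(h^{2p+1})$ increment in \eqref{sup:2}, and summing over the $O(h^{-1})$ elements gives the nodal bound $(u-u_{\cal T})(x_i)=O(h^{2p})$.
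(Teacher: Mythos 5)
Your proposal is correct and follows essentially the same route as the paper: the same corrected interpolant $u_I={\cal I}_hu+\omega_{\cal T}-\omega_I$, the same inf-sup/Galerkin-orthogonality reduction with summation by parts and the designed cancellation from \eqref{corr:w}, the same triangle-inequality passage to the Lobatto/Gauss points via the projection superconvergence, and the same constant-flux-error argument for the diffusion case. The only (harmless) deviations are that you invoke the $\mathbb P_0$-orthogonality of $u-{\cal I}_hu$ on the interface element as well, where the paper instead keeps the extra $O(h^{p+2})$ term from $\tau_k$, and you evaluate the diffusion-case constant by splitting through ${\cal I}_hu$ rather than directly through the Gauss quadrature error for $u-u_{\cal T}$.
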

\begin{proof}  First,  let
\[
   u_I={\cal I}_hu+\omega_{\cal T}-\omega_I,
\]
  where $\omega_{\cal T}$ is defined by \eqref{corr:w}, and $\omega_I$ is the linear interpolant of $\omega_{\cal T}$ given by \eqref{interp:w}, and  define a operator $D_x^{-1}$ on all $v\in H^1(\Omega)$,
\[
   D_x^{-1}v(x)=\int_{a}^xv(x)dx.
\]
  For all $v_{\cal T'}\in V_{\cal T'}$,
  it follows from \eqref{bilinear1}
\begin{eqnarray*}
   a(u-u_I,v_{\cal T'})&=&\sum_{i=1}^N\sum_{j=1}^p[v_{i,j}](\beta(u-u_I)'-\gamma (u-u_I)-cD_x^{-1}(u-u_I))(g_{i,j})\\
   &=&\sum_{i=1}^N\sum_{j=1}^p[v_{i,j}](\beta\omega_I'+\gamma (\omega_{\cal T}-\omega_I)-cD_x^{-1}(u-u_I))(g_{i,j}),
\end{eqnarray*}
  where in the last step, we have used  the definition of $\omega_{\cal T}$ in \eqref{corr:w}, which yields
\[
   (\beta(u-u_I)'-\gamma(u-u_I))(g_{i,j})=\gamma(\omega_{\cal T}-\omega_I)(g_{i,j})+\beta\omega_I'(g_{i,j}).
\]
 Noticing that $(u-{\cal I}_hu)\bot \mathbb P_0(\tau_i), i\neq k$, we have for all $x\in\tau_i$
\[
   D_x^{-1}(u-u_I)(x)=\left\{
   \begin{array}{ll}
       \int_{x_{i-1}}^x(u-{\cal I}_hu)(x)dx-\int_{a}^x(\omega_{\cal T}-\omega_{I})(x)dx, \ i\le k,\\
       \int_{x_{k-1}}^{x_k}(u-{\cal I}_hu)(x)dx+\int_{x_{i-1}}^x(u-{\cal I}_hu)(x)dx-\int_{a}^x(\omega_{\cal T}-\omega_{I})(x)dx,\ i> k,
   \end{array}
   \right.
\]
  which yields, together with \eqref{esti:w} and \eqref{est:wi}
\[
   \|D_x^{-1}(u-u_I)\|_{0,\infty}\lesssim h\|u-{\cal I}_hu\|_{0,\infty}+\|\omega_{\cal T}\|_{0,\infty}\lesssim
   h^{p+2}\|u\|_{2p+1,\infty}.
\]
  Then by the Cauchy-Schwartz inequality, \eqref{esti:w} and \eqref{est:wi}
\begin{eqnarray*}
  \left|a(u-u_I,v_{\cal T'})\right|
  &\lesssim&|v_{\cal T'}|_{1,\cal T'}\left(\sum_{i=1}^N\sum_{j=1}^pA_{i,j}(\beta\omega_I'+\gamma (\omega_{\cal T}-\omega_I)-cD_x^{-1}(u-u_I))^2(g_{i,j})\right)^{\frac 12}\\
  &\lesssim &|v_{\cal T'}|_{1,\cal T'}\left(\|\beta\omega'_{I}\|_{0,\infty}+\|\omega_{\cal T}-\omega_I\|_{0,\infty}+\|D_x^{-1}(u-{u_I})\|_{0,\infty}\right)\\
  &\lesssim& h^{p+2}\|u\|_{2p+1,\infty}|v_{\cal T'}|_{1,\cal T'},\ \ \forall v_{\cal T'}\in V_{\cal T'}.
\end{eqnarray*}
 %
 %
  Now we choose $v_{\cal T}=u_I-u_{\cal T}\in U_{\cal T}$ in \eqref{infsup} and use the orthogonality to obtain
\begin{eqnarray*}
  \|u_h-u_I\|_{1}\le \|u_h-u_I\|_{G}&\le& \frac{1}{c_0}\sup_{v_{\cal T'}\in V_{\cal T'}} \frac{a(u_h-u_I,v_{\cal T'})}{\|v_{\cal T'}\|_{\cal T'}}\lesssim  h^{p+2}\|u\|_{2p+1,\infty}.
\end{eqnarray*}
    Noticing  that $(u_h-u_I)(a)=0$,  we have
 \[
     (u_h-u_I)(x)=\int_{a}^x  (u_h-u_I)'(x) dx,
 \]
   which yields
 \[
      \|u_h-u_I\|_{0,\infty}\lesssim  |u_h-u_I|_1\lesssim h^{p+2}\|u\|_{2p+1,\infty},
 \]
   and thus,
  \[
      \|u_h-{\cal I}_hu\|_{0,\infty}\le \|u_h-u_I\|_{0,\infty}+\|\omega_{\cal T}-\omega_I\|_{0,\infty}\lesssim h^{p+2}\|u\|_{2p+1,\infty}.
 \]
    This finishes the proof of \eqref{superclose}. Since $\beta(u_{\cal T}-{\cal I}_hu)'\in\mathbb P_{p-1}$, the inverse inequality holds. Then
  \[
      \|\beta (u_{\cal T}-{\cal I}_hu)'\|_{0,\infty}\lesssim  h^{-1}  \|\beta (u_{\cal T}-{\cal I}_hu)\|_{0,\infty}\lesssim h^{p+1}\|u\|_{2p+1,\infty}.
 \]
     It has been proved in \cite{2017CaoZhangZhang} that
\[
    (u-{\cal I}_hu)(l_{i,j})\lesssim h^{p+2}\|u\|_{p+2,\infty},\ \
    \beta(u-{\cal I}_hu)'(g_{i,j})\lesssim h^{p+1}\|u\|_{p+2,\infty}.
\]
    Then \eqref{super:lob}- \eqref{super:gau} follow from the triangle inequality.

   Now we consider the special case $\gamma=c=0$. For simplicity, we denote $e_u=u-u_{\cal T}$.
   It follows from the FV scheme \eqref{conserve} that
\[
   \beta e_u'(g_{i,j})-\beta e_u'(g_{i,j+1})=0.
\]
   In other words,
\[
 \beta e_u'(g_{i,j+1})=C_0,
\]
where $C_0$ is a constant.    Summing up all $(i,j)$ yields
\[
   C_0\sum_{i=1}^N\sum_{j=1}^pA_{i,j}=\sum_{i=1}^N\sum_{j=1}^p A_{i,j}\beta e_u'(g_{i,j})=\int_{a}^be_u'(x)dx-\sum_{i=1}^NE_i=-\sum_{i=1}^NE_i,
\]
  where the error of Gauss quadrature $E_i$ in each element $\tau_i$ can be represented as
\[
   |E_i|=\frac{h_i^{2p+1}(p!)^4}{(2p+1)[(2p)!]^3}|e_u^{(2p+1)}(\xi_i)|\lesssim h^{2p+1}\|u\|_{2p+1,\infty},
\]
  where $\xi_i\in\tau_i$ is some point. Noticing that $\sum_{i=1}^N\sum_{j=1}^pA_{i,j} \thicksim(b-a)$,  we have
\[
  |C_0|\lesssim \frac{1}{b-a}\sum_{i=1}^N|E_i|\lesssim h^{2p}\|u\|_{2p+1,\infty},
\]
  and thus
\[
     |\beta e_u'(g_{i,j+1})|=|C_0|\lesssim h^{2p}\|u\|_{2p+1,\infty}.
\]
   Again, we use Gauss quadrature to obtain
\[
   e_u(x_i)-e_u(x_{i-1})=\int_{\tau_i} e'_u(x)dx=\sum_{j=1}^pA_{i,j}\beta e'_u(g_{i,j})+E_i=h_iC_0+E_i,
\]
  and thus
\[
   e_u(x_j)=e_u(x_0)+C_0\sum_{i=1}^{j}h_i+\sum_{i=1}^{j}E_i=C_0\sum_{i=1}^{j}h_i+\sum_{i=1}^{j}E_i.
\]
    Combining the estimates for $C_0$ and $E_i$, the desired results \eqref{sup:1}-\eqref{sup:2} follow. The proof is complete.
\end{proof}
\begin{remark}
   As a direct consequence of \eqref{superclose}, we immediately obtain the optimal convergence rate of the IFV solution under the $L^2$ norm. That is
  \[
     \|u-u_{\cal T}\|_0\le \|u-{\cal I}_hu\|_0+ \|{\cal I}_hu-u_{\cal T}\|_0=O(h^{p+1}).
  \]
\end{remark}
\begin{remark}
   The error estimate \eqref{optimal} and the superconvergence results \eqref{superclose} - \eqref{sup:2} can be readily extended to interface problems with multiple discontinuity.
\end{remark}

\begin{remark}
   In general, there is no superconvergence behavior on the interface point $\alpha$, unless it coincides with the generalized Gauss or Lobatto points. However, if the interfacecoincides with a mesh point, the IFVM becomes the standard FVM, and the function value is superconvergent of order $O(h^{2k})$ according to the analysis in \cite{Cao;Zhang;Zou2012}
\end{remark}

  \begin{remark}
           The error estimate \eqref{optimal} and the superconvergence results \eqref{superclose}-\eqref{super:gau} are valid for smooth variable coefficients $\gamma=\gamma(x)$ and 
          $c=c(x)$, e.g., $\gamma, c\in C^1(\Omega)$. This can be proved using the same argument as for the constant coefficients $\gamma$ and $c$, 
  \end{remark}

 \begin{remark}
    The regularity assumption   $u\in \tilde W_\beta^{2p+1,\infty}(\Omega)$  in Theorem \ref{theo:11}  is stronger than that for the counterpart IFEM in \cite{2017CaoZhangZhang}, which is $u\in \tilde W_\beta^{p+2,\infty}(\Omega)$.  
   As we may observe in our analysis, the regularity assumption  on the jump condition 
   \eqref{eq: space2}  for high order scheme is necessary. In other words, if the exact solution only satisfies the jump condition
    \eqref{eq: jump condition} instead of \eqref{eq: space2} for $m> 1$, then even the optimal convergence rate will be impaired, and this is further demonstrated in our numerical experiments (Example 5.3). 
      \end{remark}



\section{Numerical Examples}
In this section, we present some numerical experiments to demonstrate the features of IFVM.

We test the same example as in \cite{2017CaoZhangZhang}. The exact solution is chosen as
\begin{equation}\label{eq: IFE ex1}
  u(x) =
  \left\{
    \begin{array}{ll}
      \dfrac{1}{\beta^-}\cos(x), & \text{if}~~x \in [0,\alpha), \\
      \dfrac{1}{\beta^+}\cos(x) + \left(\dfrac{1}{\beta^-} - \dfrac{1}{\beta^+}\right)\cos(\alpha), & \text{if}~~x \in (\alpha,1], \\
    \end{array}
  \right.
\end{equation}
where $\alpha = \pi/6$ is the interface point, and $(\beta^-,\beta^+) = (1,5)$ represents a moderate discontinuity of the diffusion coefficient.

We use a family of uniform meshes $\{\mathcal{T}_h\}, h>0$ where $h$ denotes the mesh size. We test the IFVM for polynomial degrees $p=1,2,3$. Due to the finite machine precision, we choose different sets of meshes for different polynomial degrees $p$. The convergence rate is calculated using linear regression of the errors. Error $e_{\cal T}=u_{\cal T}-u$ in the following norms will be calculated.
\begin{eqnarray*}
  &\|e_{\cal T}\|_{N} = \max\limits_{x\in{\{x_i\}}}|u_{\cal T}(x)-u(x)|,~~~~
  &\|e_{\cal T}\|_{0,\infty} = \max\limits_{x\in\Omega}|u_{\cal T}(x)-u(x)|, \\
  &\|e_{\cal T}\|_{L} = \max\limits_{x\in\{l_{ip}\}}|u_{\cal T}(x)-u(x)|,~~~
  &\|\beta e_{\cal T}'\|_{G} = \max\limits_{x\in\{g_{ip}\}}|\beta u_{\cal T}'(x)-\beta u'(x)|, \\
  &\|e_{\cal T}\|_{0} = \left(\displaystyle\int_\Omega|u_{\cal T}-u|^2dx\right)^{\frac{1}{2}},
  &|e_{\cal T}|_{1} = \left(\int_\Omega|u_{\cal T}'-u'|^2dx\right)^{\frac{1}{2}}, \\
  &\|e_{\cal T}\|_{P} = \max\limits_{i}|e_{\cal T}(x_i)-e_{\cal T}(x_{i-1})|.
\end{eqnarray*}
Here, $\|e_{\cal T}\|_{N}$ denotes the maximum error over all the nodes (mesh points). $\|e_{\cal T}\|_{0,\infty}$ is the infinity norm over the whole domain $\Omega$. This is computed by choosing 10 uniformly distributed points on each non-interface element, and 10 uniformly distributed points in each sub-element of an interface element, and then calculating the largest discrepancy.  $\|\beta e_{\cal T}'\|_{G}$ is the maximum error of flux over all (generalized) Gauss points. $\|e_{\cal T}\|_{L}$ is maximum solution error over all (generalized) Lobatto points. $\|e_{\cal T}\|_{0}$ and $|e_{\cal T}|_{1}$ are the standard Sobolev $L^2$- and semi-$H^1$- norms. $\|e_{\cal T}\|_{P}$ measures the maximum of the difference of errors at two consecutive nodes.

\begin{example} (diffusion interface problem)\end{example}
In this example, we test IFVM for the diffusion interface problem, i.e., $\gamma = c = 0$. Errors and convergence rates for linear, quadratic and cubic IFVM solutions are listed in Tables
\ref{table: general elliptic P1-IFV-1-5 diff}, \ref{table: general elliptic P2-IFV-1-5 diff}, and \ref{table: general elliptic P3-IFV-1-5 diff}, respectively. 
The convergence rates are consistent with our theoretical analysis in Theorem 4.2. In particular, we note that for quadratic and cubic IFVM solution, the flux error at Gauss points are of order $O(h^{2p})$, which is higher than IFEM solution $O(h^{p+1})$ \cite{2017CaoZhangZhang}.

\begin{table}[thb]
\begin{center}
\caption{Error of $P_1$ IFVM Solution with $\beta=[1,5]$, $\alpha = \pi/6$, $\gamma=c=0$.}
\label{table: general elliptic P1-IFV-1-5 diff}
\begin{small}
\begin{tabular}{|r|c|c|c|c|c|c|}
\hline
$1/h$ & $\|e_{\cal T}\|_{N}$ & $\|e_{\cal T}\|_{0,\infty}$ & $\|\beta e_{\cal T}'\|_{G}$ &$\|e_{\cal T}\|_{0}$ & $|e_{\cal T}|_{1}$ & $\|e_{\cal T}\|_{P}$ \\
\hline
    8 & 3.41e-05 & 1.92e-03 & 2.11e-04 & 9.71e-04 & 2.51e-02  & 2.14e-05\\
  16 & 8.19e-06 & 4.81e-04 & 5.14e-05 & 2.42e-04 & 1.25e-02  & 2.89e-06\\
  32 & 2.05e-06 & 1.20e-04 & 1.29e-05 & 6.06e-05 & 6.26e-03  & 3.82e-07\\
  64 & 5.22e-07 & 3.01e-05 & 3.25e-06 & 1.52e-05 & 3.14e-03  & 4.95e-08\\
128 & 1.33e-07 & 7.53e-06 & 8.19e-07 & 3.82e-06 & 1.58e-03  & 6.31e-09\\
256 & 3.32e-08 & 1.88e-06 & 2.05e-07 & 9.56e-07 & 7.88e-04  & 7.95e-10\\
512 & 8.30e-09 & 4.71e-07 & 5.12e-08 & 2.40e-07 & 3.94e-04  & 9.96e-11\\
\hline
rate & 1.99        & 1.99        & 2.00        & 2.00         & 1.00  & 2.95 \\
\hline\end{tabular}
\end{small}
\end{center}
\end{table}

\begin{table}[thb]
\begin{center}
\caption{Error of $P_2$ IFVM Solution with $\beta=[1,5]$, $\alpha = \pi/6$, $\gamma=c=0$.}
\label{table: general elliptic P2-IFV-1-5 diff}
\begin{small}
\begin{tabular}{|r|c|c|c|c|c|c|c|}
\hline
$1/h$ & $\|e_{\cal T}\|_{N}$ & $\|e_{\cal T}\|_{0,\infty}$ & $\|e_{\cal T}\|_{L}$& $\|\beta e_{\cal T}'\|_{G}$ &$\|e_{\cal T}\|_{0}$ & $|e_{\cal T}|_{1}$& $\|e_{\cal T}\|_{P}$ \\
\hline
    8 & 2.80e-09 & 6.87e-06 & 2.10e-07 & 1.79e-08 & 2.51e-06 & 1.32e-04  & 1.80e-09\\
  16 & 1.80e-10 & 8.98e-07 & 1.32e-08 & 1.12e-09 & 3.18e-07 & 3.33e-05  & 6.32e-11\\
  24 & 3.55e-11 & 2.70e-07 & 2.61e-09 & 2.22e-10 & 9.46e-08 & 1.48e-05  & 8.63e-12\\
  32 & 1.11e-11 & 1.15e-07 & 8.27e-10 & 6.97e-11 & 3.97e-08 & 8.25e-06  & 2.07e-12\\
  40 & 4.62e-12 & 5.90e-08 & 3.39e-10 & 2.93e-11 & 2.07e-08 & 5.38e-06  & 6.90e-13\\
  48 & 2.26e-12 & 3.55e-08 & 1.63e-10 & 1.48e-11 & 1.21e-08 & 3.76e-06  & 2.82e-13\\
  56 & 1.27e-12 & 2.23e-08 & 8.82e-11 & 7.94e-12 & 7.57e-09 & 2.76e-06  & 1.35e-13\\
\hline
rate & 3.97        & 2.95        & 4.00        & 3.97        & 2.98         & 1.99   &  4.89\\
\hline
\end{tabular}
\end{small}
\end{center}
\end{table}

\begin{table}[thb]
\begin{center}
\caption{Error of $P_3$ IFVM Solution with $\beta=[1,5]$, $\alpha = \pi/6$, $\gamma=c=0$.}
\label{table: general elliptic P3-IFV-1-5 diff}
\begin{small}
\begin{tabular}{|r|c|c|c|c|c|c|c|}
\hline
$1/h$ & $\|e_{\cal T}\|_{N}$ & $\|e_{\cal T}\|_{0,\infty}$ & $\|e_{\cal T}\|_{L}$& $\|\beta e_{\cal T}'\|_{G}$ &$\|e_{\cal T}\|_{0}$ & $|e_{\cal T}|_{1}$& $\|e_{\cal T}\|_{P}$ \\
\hline
  4 & 6.00e-12 & 1.87e-06 & 7.29e-09 & 3.91e-11 & 8.96e-07 & 3.41e-05  & 6.00e-12 \\
  5 & 1.30e-12 & 7.68e-07 & 1.93e-09 & 9.53e-12 & 3.53e-07 & 1.69e-05  & 4.19e-12\\
  6 & 5.45e-13 & 3.71e-07 & 1.02e-09 & 3.51e-12 & 1.77e-08 & 1.01e-05  & 6.03e-13\\
  7 & 1.99e-13 & 2.01e-07 & 4.09e-10 & 1.31e-12 & 9.35e-08 & 6.23e-06  & 1.41e-13\\
  8 & 9.69e-14 & 1.18e-07 & 2.50e-10 & 6.26e-13 & 5.60e-08 & 4.27e-06  & 4.19e-14\\
  9 & 4.26e-14 & 7.34e-08 & 1.24e-10 & 3.18e-13 & 3.45e-08 & 2.95e-06  & 2.45e-14\\
\hline
rate & 5.97     & 3.99     & 4.88     & 5.92     & 4.00     & 3.00  & 6.70\\
\hline\end{tabular}
\end{small}
\end{center}
\end{table}

\begin{example} (General elliptic equations).\end{example}
In the example, we test the superconvergence behavior for general second-order equation, e.g., $\gamma=1$ and $c=1$. Tables
\ref{table: general elliptic P1-IFV-1-5} - \ref{table: general elliptic P3-IFV-1-5} report the errors and convergence rates of $P_1$, $P_2$, and $P_3$ IFVM approximation, respectively. Again, these data indicate the validity of our theoretical analysis. In Figures \ref{fig: error P1 IFV} - \ref{fig: error P3 IFV}, we plot the solution error and the flux error in a uniform mesh consists of eight elements. Note that the interface $\alpha = \pi/6$, depicted by an black dot, is in the fifth element. The (generalized) Lobatto points and the (generalized) Gauss points are show in red color. Clearly, we can see that solution errors and flux errors at these special points are much closer to zero, than the majority of the points. This again shows the superconvergence behavior of IFVM.

\begin{table}[thb]
\begin{center}
\caption{Error of $P_1$ IFVM Solution with $\beta=[1,5]$, $\alpha = \pi/6$, $\gamma=1$, $c=1$.}
\label{table: general elliptic P1-IFV-1-5}
\begin{small}
\begin{tabular}{|r|c|c|c|c|c|c|}
\hline
$1/h$ & $\|e_{\cal T}\|_{N}$ & $\|e_{\cal T}\|_{0,\infty}$ & $\|\beta e_{\cal T}'\|_{G}$ &$\|e_{\cal T}\|_{0}$ & $|e_{\cal T}|_{1}$ & $\|e_{\cal T}\|_{P}$ \\
\hline
    8 & 7.64e-05 & 1.92e-03 & 1.21e-03 & 9.98e-04 & 2.51e-02  & 5.49e-05\\
  16 & 2.03e-05 & 4.81e-04 & 3.05e-04 & 2.49e-04 & 1.25e-02  & 7.76e-06\\
  32 & 4.56e-06 & 1.20e-04 & 7.75e-05 & 6.22e-05 & 6.26e-03  & 9.70e-07\\
  64 & 1.17e-06 & 3.01e-05 & 1.95e-05 & 1.56e-05 & 3.14e-03  & 1.25e-07\\
128 & 2.81e-07 & 7.53e-06 & 4.91e-06 & 3.91e-06 & 1.58e-03  & 1.55e-08\\
256 & 7.02e-08 & 1.88e-06 & 1.23e-06 & 9.78e-07 & 7.88e-04  & 1.95e-09\\
512 & 1.76e-08 & 4.71e-07 & 3.07e-07 & 2.44e-07 & 3.94e-04  & 2.45e-10\\
\hline
rate & 2.02        & 1.99        & 1.99        & 2.00         & 1.00  & 2.97 \\
\hline\end{tabular}
\end{small}
\end{center}
\end{table}

\begin{table}[thb]
\begin{center}
\caption{Error of $P_2$ IFVM Solution with $\beta=[1,5]$, $\alpha = \pi/6$, $\gamma=1$, $c=1$.}
\label{table: general elliptic P2-IFV-1-5}
\begin{small}
\begin{tabular}{|r|c|c|c|c|c|c|c|}
\hline
$1/h$ & $\|e_{\cal T}\|_{N}$ & $\|e_{\cal T}\|_{0,\infty}$ & $\|e_{\cal T}\|_{L}$& $\|\beta e_{\cal T}'\|_{G}$ &$\|e_{\cal T}\|_{0}$ & $|e_{\cal T}|_{1}$& $\|e_{\cal T}\|_{P}$ \\
\hline
    8 & 5.46e-08 & 6.68e-06 & 1.71e-07 & 6.67e-06 & 2.51e-06 & 1.32e-04  & 2.61e-08\\
  16 & 8.84e-09 & 8.90e-07 & 1.23e-08 & 8.95e-06 & 3.18e-07 & 3.33e-05  & 1.39e-09\\
  24 & 1.84e-09 & 2.68e-07 & 2.49e-09 & 2.70e-07 & 9.46e-08 & 1.48e-05  & 1.90e-10\\
  32 & 2.97e-10 & 1.14e-07 & 7.92e-10 & 1.14e-07 & 3.97e-08 & 8.25e-06  & 3.20e-11\\
  40 & 4.62e-11 & 5.86e-08 & 3.25e-10 & 5.90e-08 & 2.07e-08 & 5.38e-06  & 6.69e-12\\
  48 & 3.32e-11 & 3.54e-08 & 1.58e-10 & 3.55e-08 & 1.21e-08 & 3.76e-06  & 3.27e-12\\
  56 & 4.92e-11 & 2.22e-08 & 8.63e-11 & 2.23e-08 & 7.57e-09 & 2.76e-06  & 2.42e-12\\
\hline
rate & 4.14        & 2.93        & 3.91        & 2.93        & 2.98         & 1.99   &  5.03\\
\hline
\end{tabular}
\end{small}
\end{center}
\end{table}


\begin{table}[thb]
\begin{center}
\caption{Error of $P_3$ IFVM Solution with $\beta=[1,5]$, $\alpha = \pi/6$, $\gamma=1$, $c=1$.}
\label{table: general elliptic P3-IFV-1-5}
\begin{small}
\begin{tabular}{|r|c|c|c|c|c|c|c|}
\hline
$1/h$ & $\|e_{\cal T}\|_{N}$ & $\|e_{\cal T}\|_{0,\infty}$ & $\|e_{\cal T}\|_{L}$& $\|\beta e_{\cal T}'\|_{G}$ &$\|e_{\cal T}\|_{0}$ & $|e_{\cal T}|_{1}$& $\|e_{\cal T}\|_{P}$ \\
\hline
  4 & 6.56e-09 & 1.89e-06 & 9.81e-08 & 2.02e-06 & 8.95e-07 & 3.41e-05  & 3.55e-09 \\
  6 & 1.82e-09 & 3.74e-07 & 1.29e-08 & 4.03e-07 & 1.77e-07 & 1.01e-05  & 7.17e-10\\
  8 & 6.56e-10 & 1.18e-07 & 3.30e-09 & 1.28e-07 & 5.60e-08 & 4.27e-06  & 2.01e-10\\
10 & 2.56e-10 & 4.85e-08 & 1.13e-09 & 5.24e-08 & 2.30e-08 & 2.19e-06  & 6.42e-11\\
12 & 9.88e-11 & 2.34e-08 & 4.52e-10 & 2.52e-08 & 1.11e-08 & 1.27e-06  & 2.09e-11\\
14 & 3.58e-11 & 1.26e-08 & 2.00e-10 & 1.36e-08 & 5.98e-09 & 7.95e-07  & 6.53e-12\\
16 & 1.20e-11 & 7.39e-09 & 9.70e-11 & 7.96e-09 & 3.50e-09 & 5.32e-07  & 1.93e-12\\
18 & 3.09e-12 & 4.61e-09 & 5.09e-11 & 4.96e-09 & 2.18e-09 & 3.73e-07  & 4.40e-13\\
\hline
rate & 4.88     & 4.00     & 4.99     & 4.00     & 4.00     & 3.00  & 5.77\\
\hline\end{tabular}
\end{small}
\end{center}
\end{table}

\begin{figure}[htb]
  \centering
  \includegraphics[width=.48\textwidth]{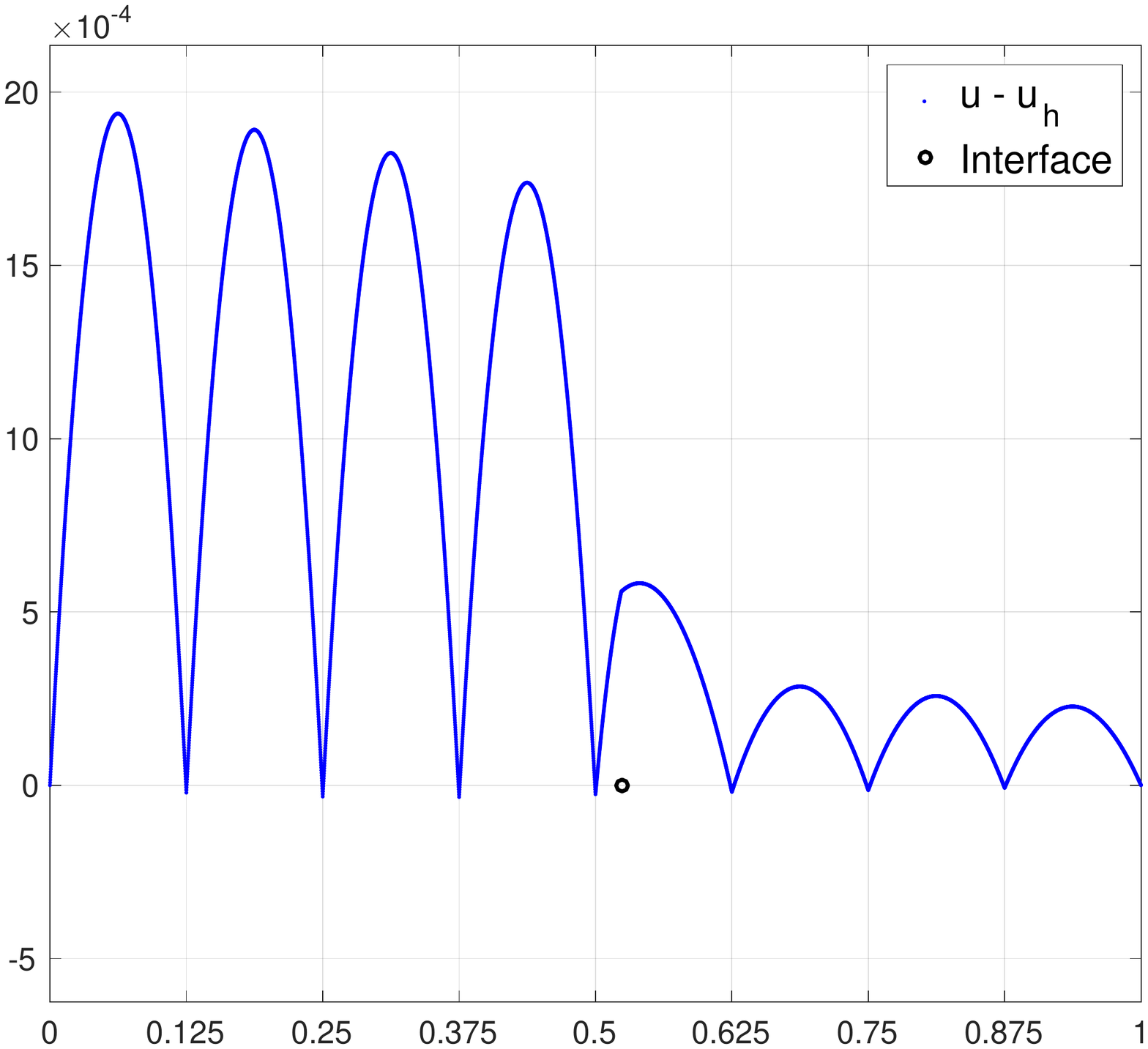}~
  \includegraphics[width=.48\textwidth]{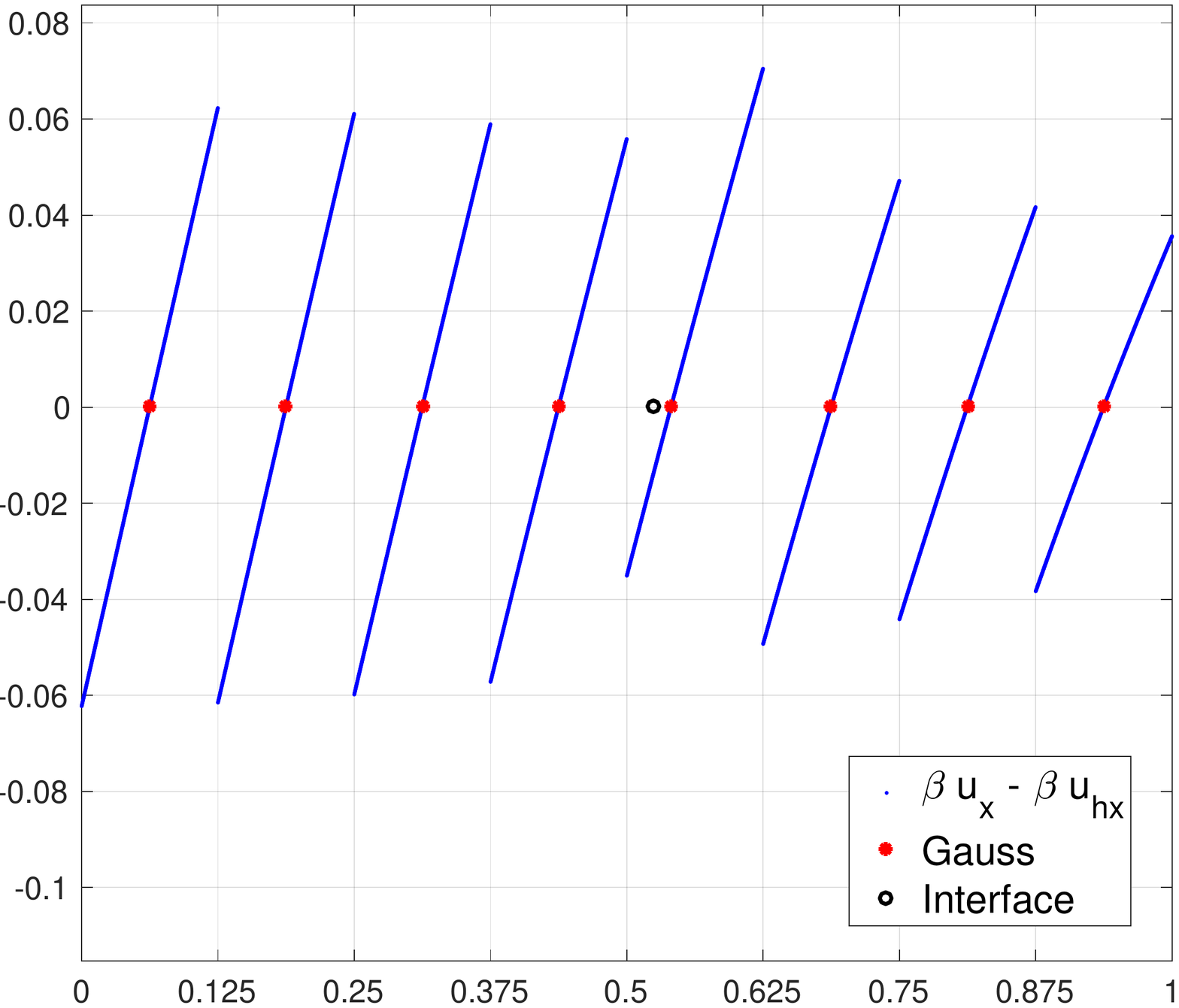}
  \caption{Error and flux error of $P_1$ IFVM solution. $\beta=\{1,5\}$, $\alpha = \dfrac{\pi}{6}$}
  \label{fig: error P1 IFV}
\end{figure}

\begin{figure}[htb]
  \centering
  \includegraphics[width=.48\textwidth]{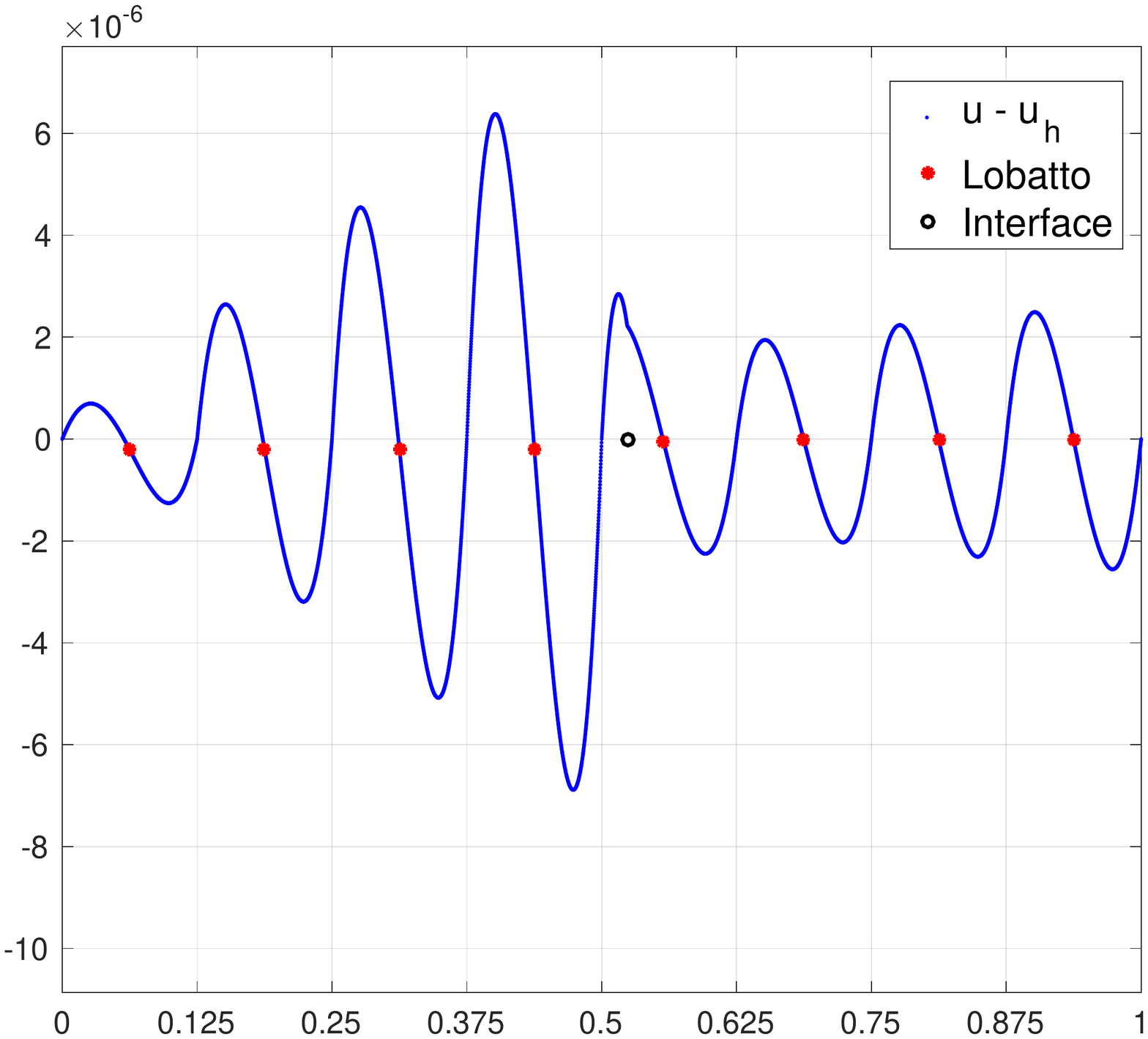}~
  \includegraphics[width=.48\textwidth]{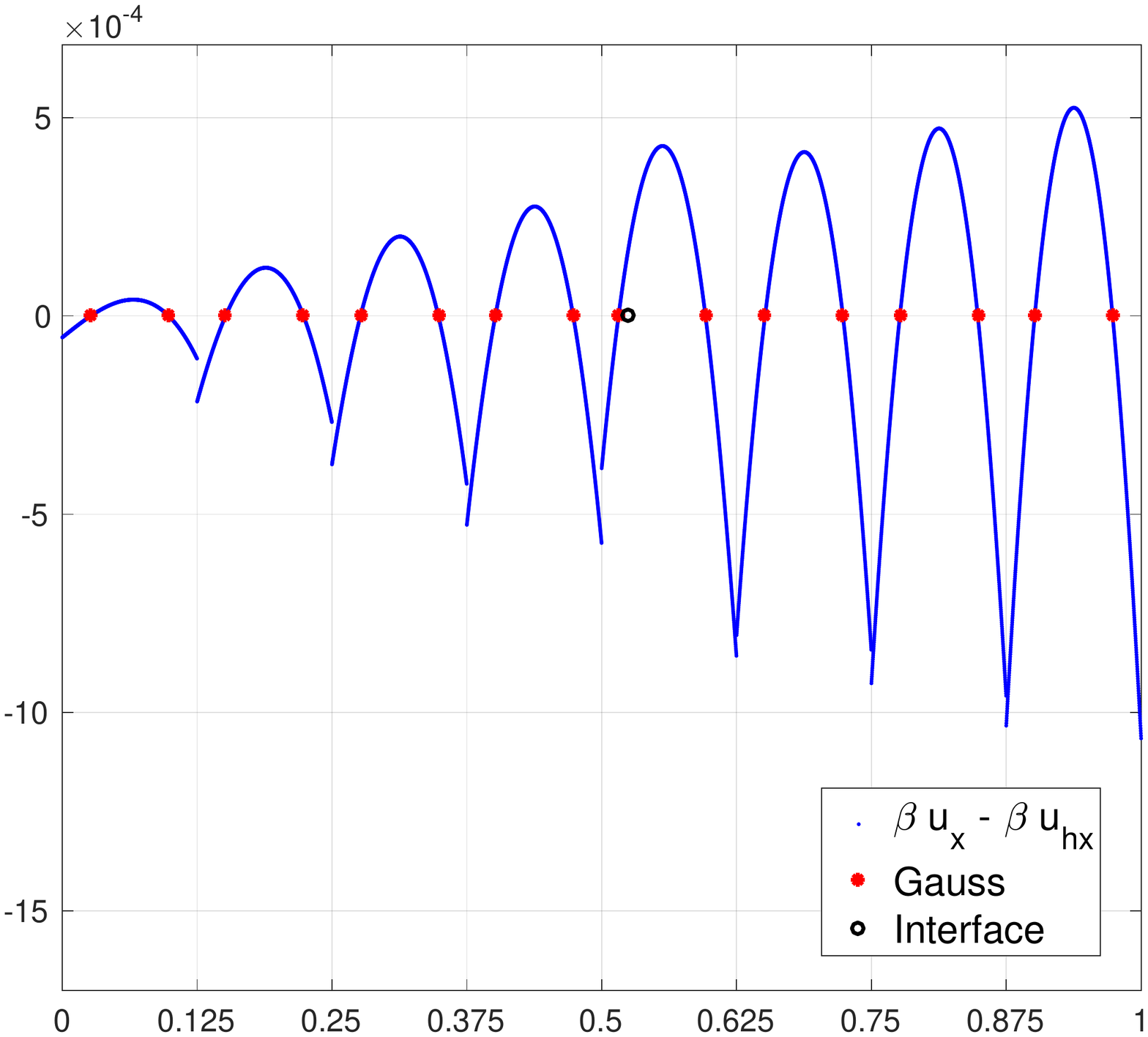}
  \caption{Error and flux error of $P_2$ IFVM solution. $\beta=\{1,5\}$, $\alpha = \dfrac{\pi}{6}$}
  \label{fig: error P2 IFV}
\end{figure}

\begin{figure}[htb]
  \centering
  \includegraphics[width=.48\textwidth]{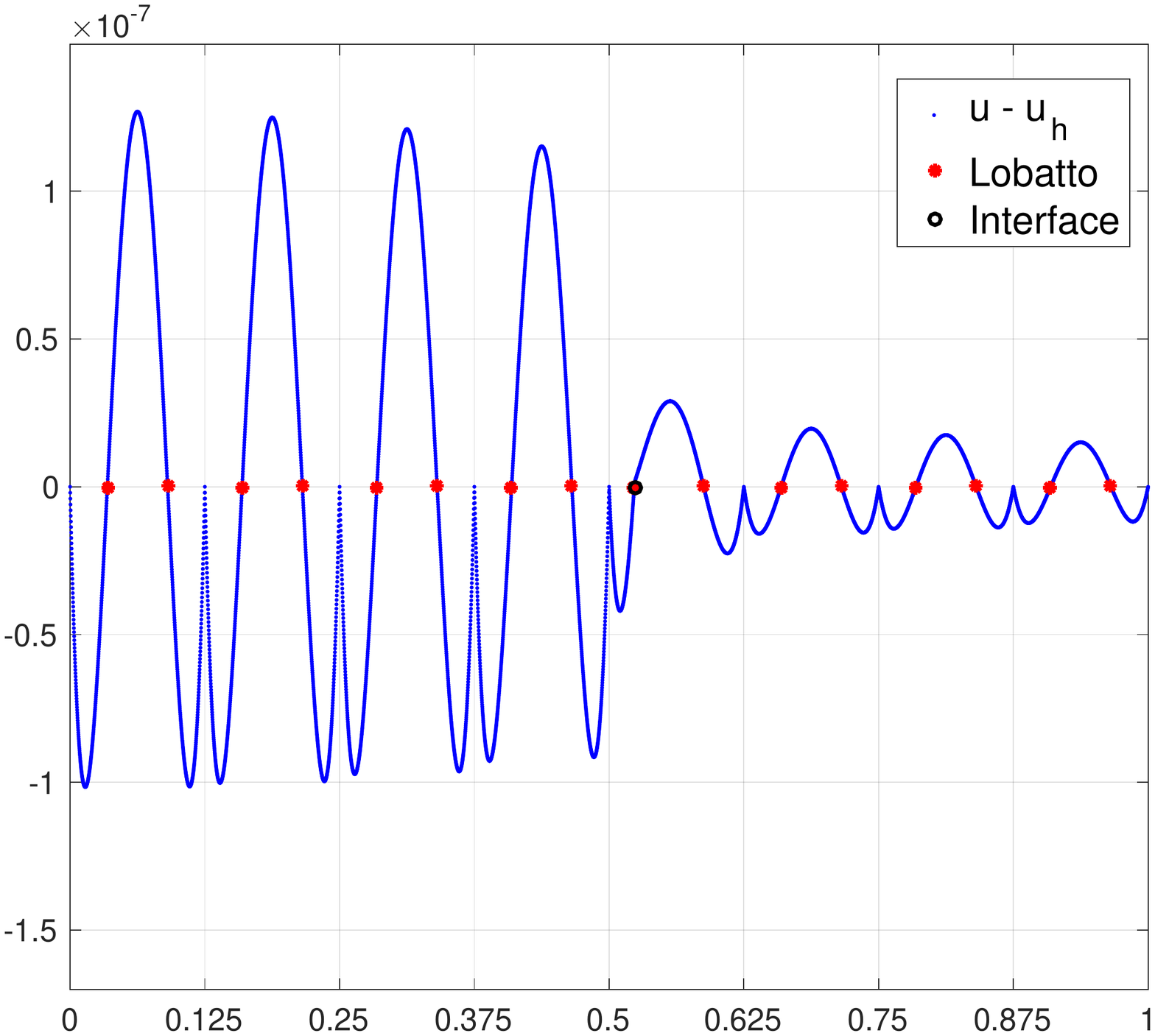}~
  \includegraphics[width=.48\textwidth]{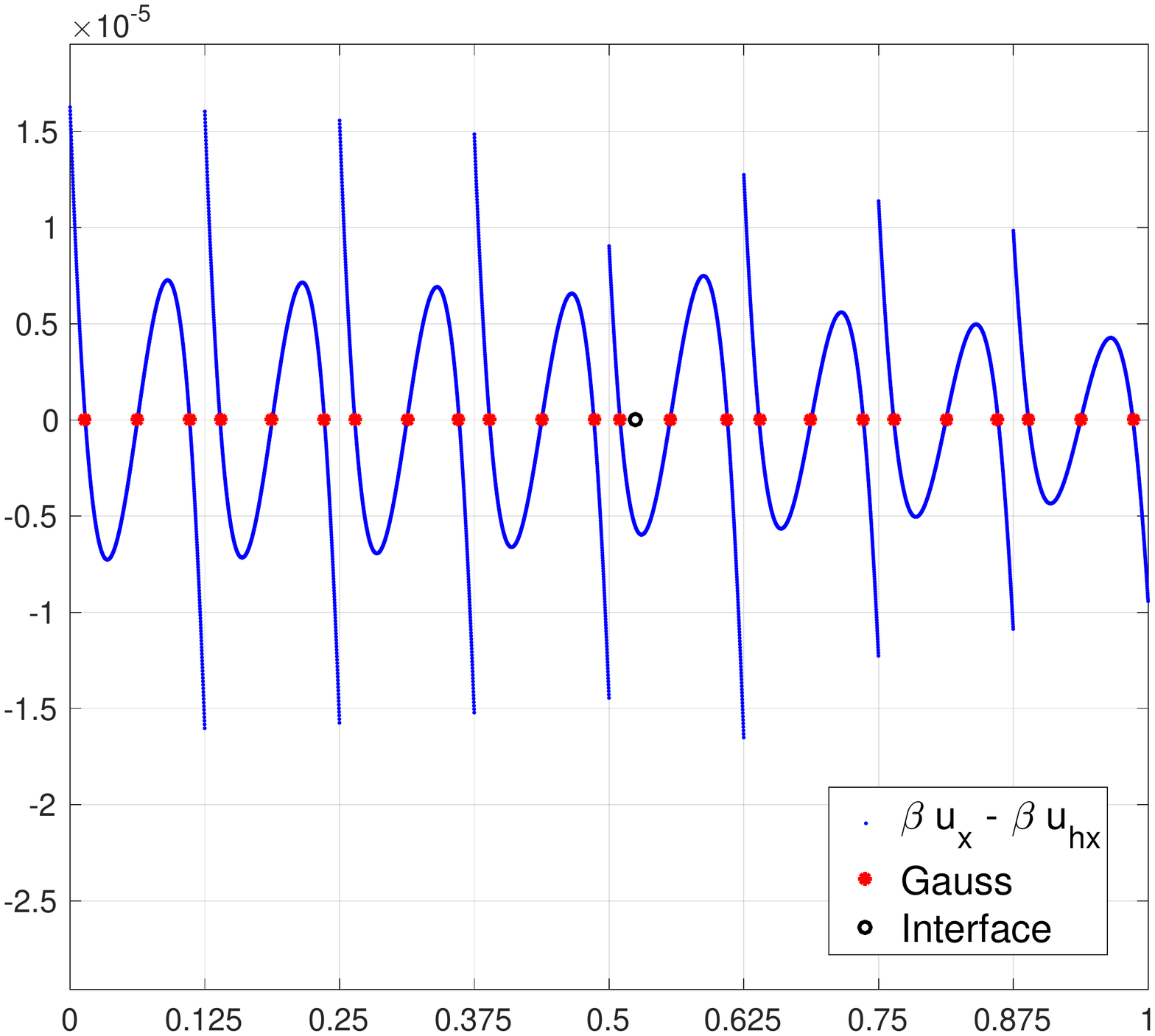}
  \caption{Error and flux error of $P_3$ IFVM solution. $\beta=\{1,5\}$, $\alpha = \dfrac{\pi}{6}$}
  \label{fig: error P3 IFV}
\end{figure}

\begin{example} (Superconvergence for less smooth functions).\end{example}
In the example, we test the convergence and superconvergence behivior for IFVM  and IFEM for nonsmooth functions.

For this example, we consider the following function as the exact solution
\begin{equation}\label{eq: IFE ex3}
  u(x) =
  \left\{
    \begin{array}{ll}
      \dfrac{1}{\beta^-}\cos(x), & \text{if}~~x \in [0,\alpha), \\
      \dfrac{1}{\beta^+}\cos(x) + \left(\dfrac{1}{\beta^-} - \dfrac{1}{\beta^+}\right)\cos(\alpha) + \dfrac{1}{\beta^+}(x-\alpha)^m, & \text{if}~~x \in (\alpha,1], \\
    \end{array}
  \right.
\end{equation}
where $m\ge 2$ is a positive integer. Direct calculation yields,
$$\bigjump{\beta u^{(j)}(\alpha)}= 0, ~~~ 1\le j\le m-1,~~~~~~\text{and}~~~~~~\bigjump{\beta u^{(m)}(\alpha)}\ne 0.$$
In particular, when $m=2$, the function \eqref{eq: IFE ex3} satisfies only the minimal regularity requirement \eqref{eq: jump condition}, but not the regularity condition in Theorem \ref{theo:11}. We test the diffusion interface problems using both immersed finite volume method and the immersed finite element methods \cite{2017CaoZhangZhang}. The errors of IFVM and IFEM solutions are presented in Table \ref{table: diffusion P2-IFV-1-5 nonsmooth}, \ref{table: diffusion P2-IFE-1-5 nonsmooth}, respectively. We note that the superconvergence behavior at (generalized) Lobatto points and (generalized) Gauss points are both affected by the low regularity of the exact solution. However we may still observe some superconvergence behavior at these points, even though neither of these convergence rates come close to the maximum rates of convergence in the analysis for smooth solution.

Moreover, we plot the errors of solution and flux for IFVM and IFEM in Figure \ref{fig: error P2 IFV nonsmooth}  and \ref{fig: error P2 IFE nonsmooth}, respectively. We can observe that IFVM flux error at (generalized) Gauss points are much closer to zero than the IFEM solution, even for nonsmooth functions. However, IFEM solution seems more accurate than IFVM solution on noninterface elements. In particular, the numerical solution at the mesh points are still exact, and the error at Lobatto points are much closer to zero than other interior points. For IFVM, the solution error at Lobatto points seems not superconvergent on either the interface element and noninterface elements.

\begin{table}[thb]
\begin{center}
\caption{Error of $P_2$ IFVM for Nonsmooth Solution $\beta=[1,5]$, $\alpha = \pi/6$, $\gamma=0$, $c=0$, $m=2$.}
\label{table: diffusion P2-IFV-1-5 nonsmooth}
\begin{small}
\begin{tabular}{|r|c|c|c|c|c|c|c|}
\hline
$1/h$ & $\|e_{\cal T}\|_{N}$ & $\|e_{\cal T}\|_{0,\infty}$ & $\|e_{\cal T}\|_{L}$& $\|\beta e_{\cal T}'\|_{G}$ &$\|e_{\cal T}\|_{0}$ & $|e_{\cal T}|_{1}$\\
\hline
    8 & 5.98e-05 & 1.61e-04 & 5.24e-05 & 1.19e-04 & 3.34e-05 & 2.24e-03  \\
  16 & 5.27e-05 & 1.19e-04 & 4.93e-05 & 1.05e-04 & 2.64e-05 & 1.40e-03  \\
  32 & 9.46e-06 & 9.96e-06 & 9.72e-06 & 1.89e-05 & 4.24e-06 & 1.56e-04  \\
  64 & 3.86e-06 & 6.71e-06 & 3.80e-06 & 7.49e-06 & 1.70e-06 & 1.73e-04  \\
128 & 2.20e-08 & 2.38e-08 & 2.18e-08 & 4.20e-08 & 9.35e-09 & 2.44e-06  \\
\hline
rate & 2.66        & 2.96        & 2.62        & 2.68        & 2.76         & 2.27  \\
\hline
\end{tabular}
\end{small}
\end{center}
\end{table}

\begin{table}[thb]
\begin{center}
\caption{Error of $P_2$ IFEM for Nonsmooth Solution $\beta=[1,5]$, $\alpha = \pi/6$, $\gamma=0$, $c=0$, $m=2$.}
\label{table: diffusion P2-IFE-1-5 nonsmooth}
\begin{small}
\begin{tabular}{|r|c|c|c|c|c|c|}
\hline
$1/h$ & $\|e_{\cal T}\|_{N}$ & $\|e_{\cal T}\|_{0,\infty}$ & $\|e_{\cal T}\|_{L}$& $\|\beta e_{\cal T}'\|_{G}$ &$\|e_{\cal T}\|_{0}$ & $|e_{\cal T}|_{1} $\\
\hline
    8 & 2.44e-15 & 1.49e-04 & 3.25e-05 & 4.21e-03 & 2.15e-05 & 1.93e-03  \\
  16 & 1.58e-14 & 7.42e-05 & 4.87e-06 & 4.40e-03 & 1.01e-05 & 1.28e-03  \\
  32 & 9.29e-14 & 8.34e-06 & 3.89e-06 & 1.54e-03 & 8.42e-07 & 1.88e-04  \\
  64 & 3.93e-13 & 4.45e-06 & 1.01e-06 & 5.02e-04 & 3.67e-07 & 1.61e-04  \\
128 & 8.00e-13 & 2.88e-08 & 4.23e-09 & 3.76e-05 & 9.02e-10 & 1.98e-06  \\
\hline
rate & -              & 3.00        & 2.62        & 2.68         & 3.39         & 2.29  \\
\hline
\end{tabular}
\end{small}
\end{center}
\end{table}

\begin{figure}[htb]
  \centering
  \includegraphics[width=.48\textwidth]{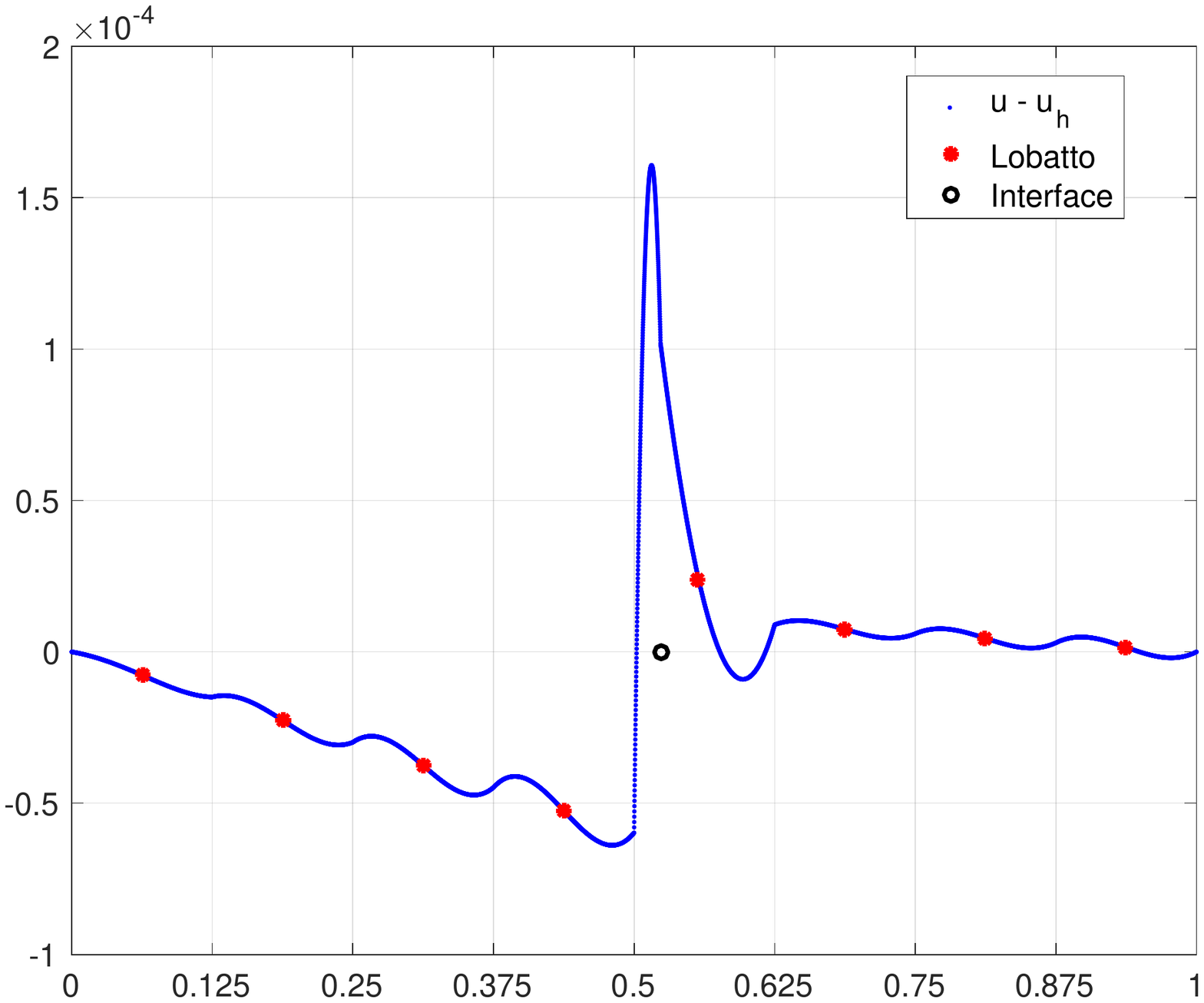}~
  \includegraphics[width=.48\textwidth]{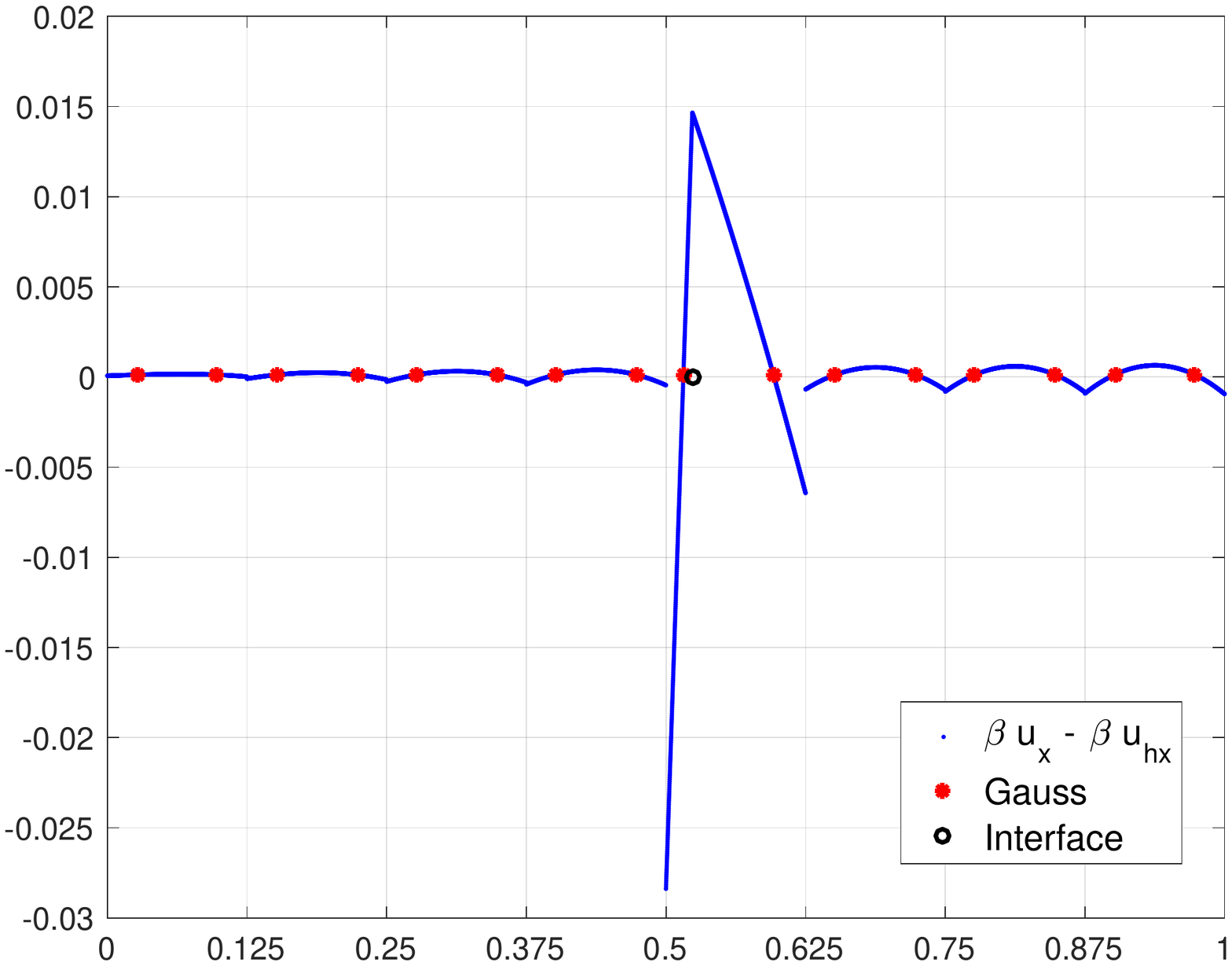}
  \caption{Error and flux error of $P_2$ IFVM solution for nonsmooth function. $\beta=\{1,5\}$, $\alpha = \dfrac{\pi}{6}$}
  \label{fig: error P2 IFV nonsmooth} 
\end{figure}

\begin{figure}[htb]
  \centering
  \includegraphics[width=.48\textwidth]{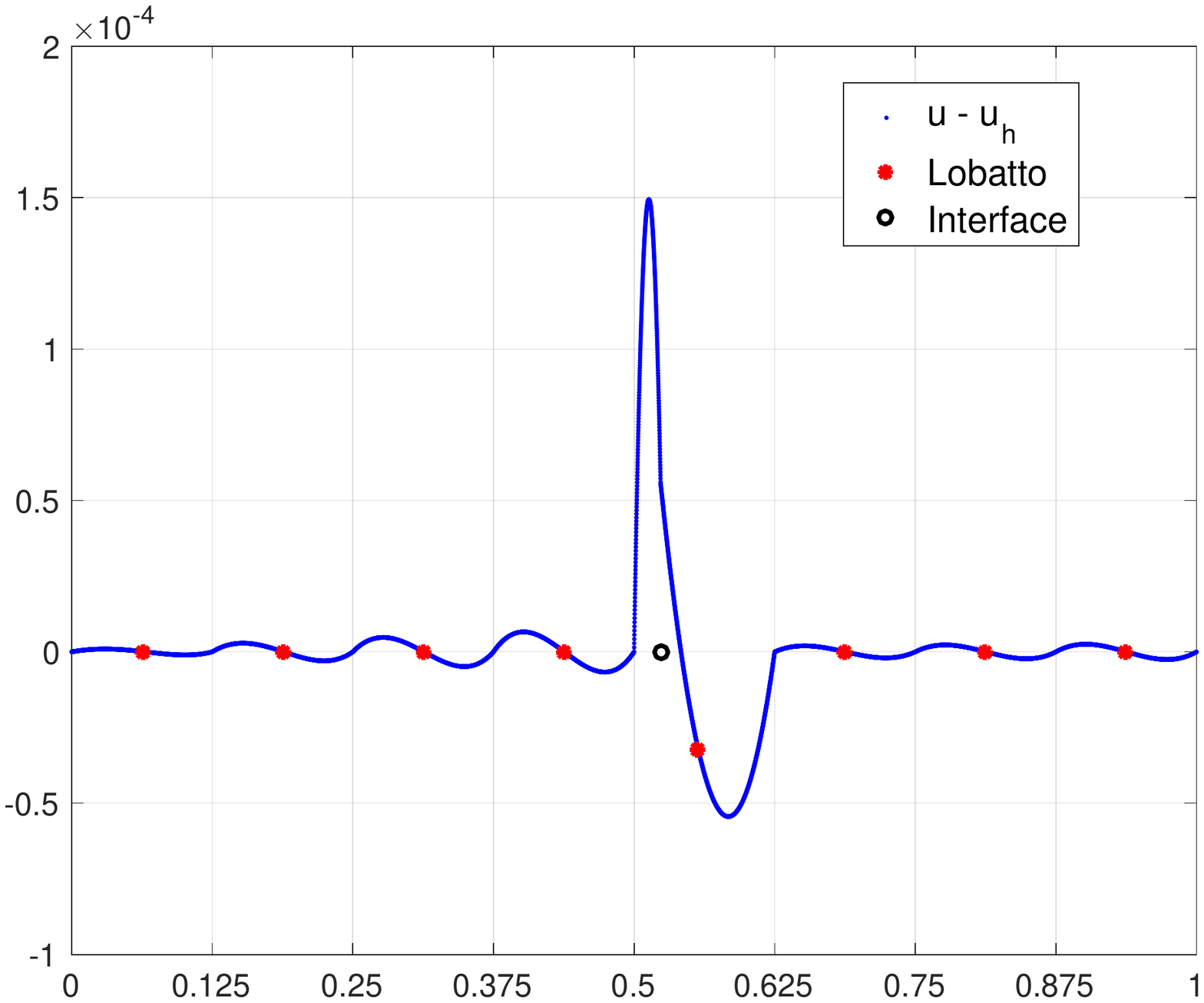}~
  \includegraphics[width=.48\textwidth]{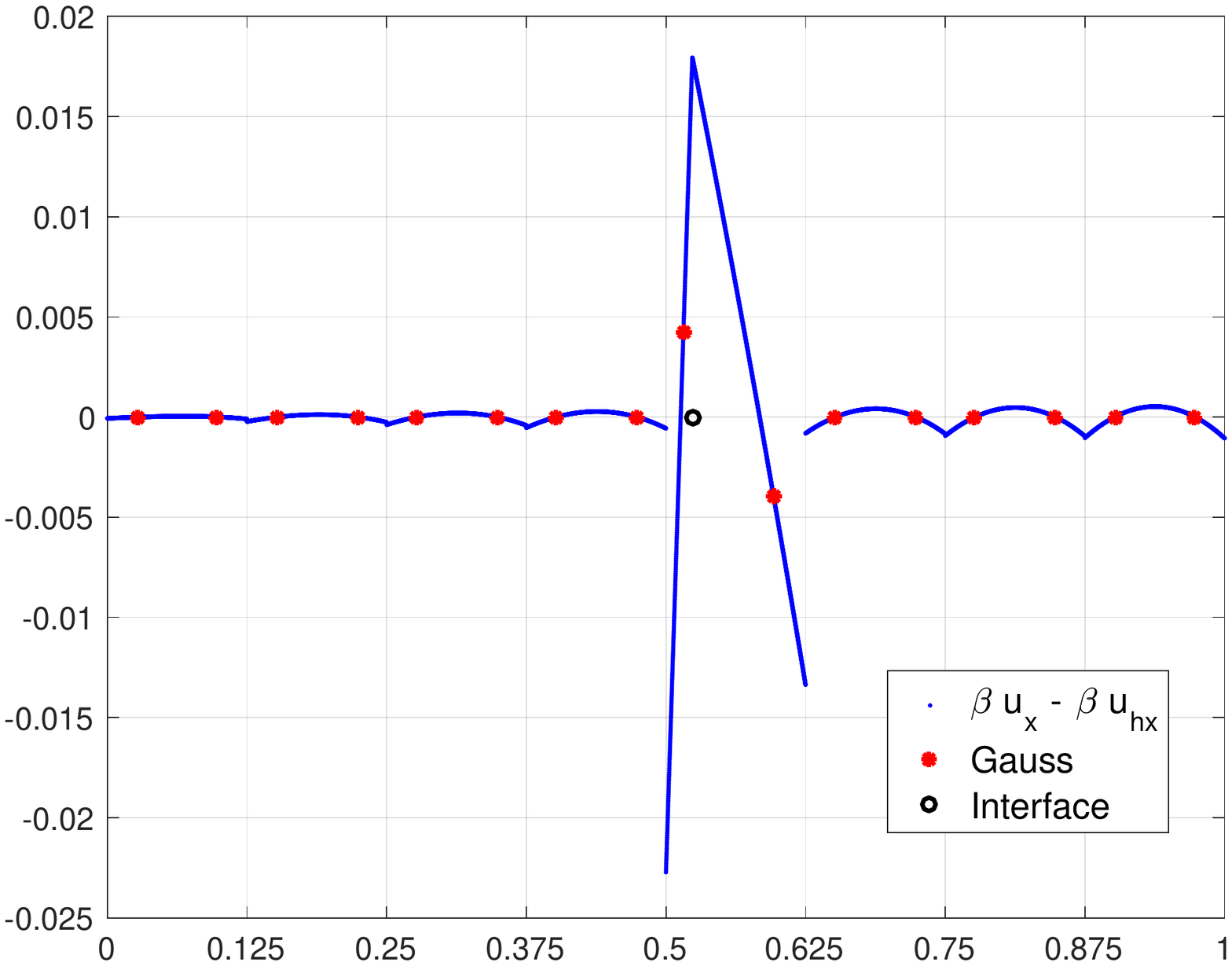}
  \caption{Error and flux error of $P_2$ IFEM solution for nonsmooth function. $\beta=\{1,5\}$, $\alpha = \dfrac{\pi}{6}$}
  \label{fig: error P2 IFE nonsmooth} 
\end{figure}

\section{Concluding Remarks}

   In this paper, we present an unified approach to study a class of high order IFVM for one-dimensional elliptic interface problems. 
 Using the generalized Lobatto polynomials which satisfy both orthogonality and interface jump conditions  as  the trial function space, and the generalized Gauss points  as 
 the control volume, we established the inf-sup condition and continuity of the bilinear form,  and then proved that the  IFVM solution converge optimally in both $H^1$- and $L^2$-norms.    Furthermore, we designed a new approach to study the superconvergence of IFVM, which is different from the method of Green function used in \cite{Cao;Zhang;Zou2012}, 
 and thus established superconvergence results for the IFV solution.  
 
 
The extension of the superconvergence analysis for two-dimensional interface problems is non-trivial. There are at least two obstacles. First, to the best of our knowledge, only the lowest order immersed finite element spaces ($P_1$ on triangular meshes and $Q_1$ on rectangular meshes) are reported for two-dimensional interface problems. The construction of higher order immersed FEM/FVM functions is still under investigation. Secondly, in two-dimensional case, the interface becomes an arbitrary curve, and in 3D, a surface. Error analysis for standard energy norm or $L^2$ norm is very difficult for such interface problems, and we believe the superconvergence analysis could even more challenging. Hence, the superconvergence analysis for multi-dimensional interface problems is a whole new territory, and therefore worth separate papers for dedicated study. 



\end{document}